\newcommand{\field}[1]{\ensuremath{\mathbb{#1}}}
\newcommand{\integer}{\field{Z}}
\newcommand{\naturalnumber}{\field{N}}
\newcommand{\Z}[1]{\ensuremath{\integer/#1\integer}}
\newcommand{\rational}{\field{Q}}
\newcommand{\real}{\field{R}}
\newcommand{\complete}[2][n]{\ensuremath{K_{#2}^{#1}}}
\newcommand{\crossed}[1]{\ensuremath{C(#1)}}
\newcommand{\divided}[2][n]{\ensuremath{\Delta^{#1}(#2)}}
\newcommand{\dividedsigma}[2][n]{\ensuremath{\Sigma^{#1}(#2)}}
\newcommand{\comdiv}[3][n]{\ensuremath{\complete[#1]{#2}(#3)}}
\newcommand{\prism}{\ensuremath{\mathcal{P}}}
\newcommand{\cyl}{\ensuremath{\mathcal{C}}}
\newcommand{\sph}{\ensuremath{\mathcal{S}}}
\newcommand{\link}[2]{\ensuremath{\ell k(#1,#2)}}
\newcommand{\linktwo}[2]{\ensuremath{\ell k_2(#1,#2)}}
\newcommand{\ambient}[1][n]{\ensuremath{\real^{2#1+1}}}
\newcommand{\rthree}{\ensuremath{\real^3}}
\newcommand{\eps}{\varepsilon}
\DeclareMathOperator{\id}{id}
\newcommand{\key}[1][n]{\kappa_{#1}}
\newcommand{\vect}[1]{\ensuremath{\mathbf{#1}}}
\newcommand{\vectg}[1]{\ensuremath{\boldsymbol{#1}}} 
\newcommand{\citeramsey}{\cite{flapan2002,flapan-mellor-naimi2008,flapan-pommersheim-foisy-naimi2001,fleming2007,fleming-diesl2005}}
\newcommand{\etal}{~et~al.}
\newtheorem{theorem}{Theorem}[section]
\newtheorem{lemma}[theorem]{Lemma}
\newtheorem{corollary}[theorem]{Corollary}
\newtheorem*{lemma*}{Lemma}
\numberwithin{equation}{section}
\theoremstyle{definition}
\newtheorem{construction}[theorem]{Construction}
\theoremstyle{remark}
\newtheorem{remark}[theorem]{Remark}
\begin{document}

\title{Some Ramsey-type results on intrinsic linking of $n$-complexes}
\author{Christopher Tuffley}
\date{\today}
\address{Institute of Fundamental Sciences, Massey University,
         Private Bag 11 222, Palmerston North 4442, New Zealand}
\email{c.tuffley@massey.ac.nz}

\subjclass[2010]{57Q45 (57M15, 57Q35)}
\keywords{Intrinsic linking, complete $n$-complex, Ramsey Theory}

\begin{abstract}
Define the \emph{complete $n$-complex on $N$ vertices}, \complete{N},
to be the $n$-skeleton of an $(N-1)$-simplex. We show that embeddings
of sufficiently large complete $n$-complexes in \ambient\ necessarily
exhibit complicated linking behaviour, thereby extending known results
on embeddings of large complete graphs in $\real^3$ (the case $n=1$)
to higher dimensions. In particular, we prove the existence of links
of the following types: $r$-component links, with the linking pattern
of a chain, necklace or keyring; 2-component links with linking number
at least $\lambda$ in absolute value; and $2$-component links with
linking number a non-zero multiple of a given integer $q$. For fixed
$n$ the number of vertices required for each of our results grows at
most polynomially with respect to the parameter $r$, $\lambda$ or $q$.
\end{abstract}

\maketitle

\section{Introduction}

In the 1980s Sachs~\cite{sachs83} and Conway and Gordon~\cite{conway-gordon83}
proved that an embedding of the complete graph $K_6$ in $\real^3$
necessarily contains a pair of disjoint cycles that form a non-split
link. This fact is expressed by saying that $K_6$ is 
\emph{intrinsically linked}. Conway and Gordon also showed that
every embedding of $K_7$ in $\real^3$ contains a cycle that forms
a nontrivial knot, and we say that $K_7$ is \emph{intrinsically knotted}.

Since these papers, the study of intrinsic knotting and linking has
been pursued in several directions, and we refer the reader to
Ram{\'\i}rez Alfons{\'\i}n~\cite{ramirezalfonsin-2005} for a survey of some known results.
One such direction is to show that embeddings of larger complete
graphs necessarily exhibit more complex knotting and linking
behaviour. Restricting our attention to linking,
Flapan\etal~\cite{flapan-pommersheim-foisy-naimi2001} and Fleming and
Diesl~\cite{fleming-diesl2005} have shown that embeddings of
sufficiently large complete graphs must contain non-split
$r$-component links; Flapan~\cite{flapan2002} has shown that they must
contain 2-component links with high linking number; and
Fleming~\cite{fleming2007} has extended work by Fleming and
Diesl~\cite{fleming-diesl2005} to show that, given an integer $q$,
they must contain 2-component links with linking number a nonzero
multiple of $q$.

We will refer to results such as those described above as
\emph{Ramsey-type results} on intrinsic linking. Perhaps the strongest
results in this direction are those of Negami~\cite{negami1998-good} and
Flapan, Mellor and Naimi~\cite{flapan-mellor-naimi2008}. Restricting
attention to embeddings with a projection that is a ``good drawing'',
Negami shows that, given a link $L$, for $n,m$ sufficiently large
every such embedding of the complete bipartite graph $K_{n.m}$
contains a link that is ambient isotopic to $L$.  The restriction to
embeddings with a projection that is a good drawing excludes local
knots in the edges, which is necessary but not sufficient
(Negami~\cite{negami2001}) for the result to hold.  With no
restriction on the embedding, Flapan, Mellor and Naimi show that
intrinsic knotting and linking are arbitrarily complex in the
following sense: Given positive integers $r$ and $\alpha$, embeddings
of sufficiently large complete graphs contain $r$-component links in
which the second co-efficient of the Conway polynomial of each
component, and the linking number of each pair of components, is at
least $\alpha$ in absolute value.

Extending the result of Sachs~\cite{sachs83} and Conway and
Gordon~\cite{conway-gordon83} in another direction, we may consider
embeddings of $n$-complexes in $\real^d$. By a general position
argument every $n$-complex embeds in $\real^{2n+1}$, and a pair of
disjoint $n$-spheres in $\real^{2n+1}$ have a well defined linking
number (the homology class of one component in the $n$th homology
group of the complement of the second, which is isomorphic to
$\integer$), so we take $d=2n+1$. Define the \emph{complete
  $n$-complex on $N$ vertices}, \complete{N}, to be the $n$-skeleton
of an $N-1$ simplex. Then Lov{\'a}sz and
Schrijver~\cite[Cor.~1.1]{lovasz-schrijver1998},
Taniyama~\cite{taniyama2000}, Melikhov~\cite[Ex.~4.7]{melikhov2009} and
Melikhov~\cite[Ex.~4.9]{melikhov2011} show by various arguments that
\complete{2n+4}\ is intrinsically linked, in the sense that every
embedding in \ambient\ contains a pair of disjoint $n$-spheres that
have nonzero linking number. Since $\complete[1]{N}\cong K_N$ this
specialises to the $K_6$ result in the case $n=1$.

The purpose of this paper is to establish some Ramsey-type results for
embeddings of complete $n$-complexes in \ambient.  Our results are
already known for embeddings of complete graphs in $\real^3$, and our
arguments will typically mimic the proof of the corresponding
1-dimensional result. However, in the case of Theorem~\ref{modq.th} we
will obtain a better bound for $n=1$ than that previously known; and
in addition, some constructions used in the arguments require
modifications in higher dimensions. These modifications are needed for
two main reasons: Firstly, $\partial D^n=S^{n-1}$ is disconnected for
$n=1$, but not for $n\geq 2$; and secondly, triangulations of $D^n$
have simpler combinatorics for $n=1$ than they do for $n\geq 2$.

We note that for $n\geq2$ an $n$-sphere does not
knot in \ambient\ for reasons of co-dimension, and an arbitrary 
$n$-complex does not necessarily embed in $\real^{n+2}$. Thus, we will not seek
to establish any results on intrinsic knotting of complete 
$n$-complexes.

\subsection{Statement of results}

In what follows, a \emph{$k$-component link} means $k$
disjoint $n$-spheres embedded in \ambient. Given a 2-component
link $L_1\cup L_2$ we will write \link{L_1}{L_2}\ for their linking number,
and $\linktwo{L_1}{L_2}$ for their linking number mod two.  For
$\{i,j\}=\{1,2\}$ the integral linking number is given by the homology
class $[L_i]$ in $H_n(\ambient-L_j;\integer)\cong\integer$.

Our first result is similar to Theorems~1 and~2 of
Flapan\etal~\cite{flapan-pommersheim-foisy-naimi2001}, and shows that
embeddings of sufficiently large complete $n$-complexes necessarily
contain non-split $r$-component links.  Moreover, the number of
vertices required grows at most linearly with respect to each of $r$
and $n$.

\begin{theorem}
\label{chains.th}
Let $r\in\naturalnumber$, $r\geq 2$. 
\begin{enumerate}
\renewcommand{\theenumi}{\alph{enumi}}
\item
\label{chain.item}
For $N\geq (2n+4)(r-1)$ every embedding of $\complete{N}$ in
$\ambient$ contains an
$r$-component link $L_1\cup L_2\cup\cdots\cup L_r$ such that 
\begin{equation}
\label{sequentiallinking.eq}
\linktwo{L_i}{L_{i+1}} \neq 0
\end{equation}
for $i=1,\ldots,r-1$.
\item
\label{necklace.item}
If $r\geq 3$ then for $N\geq (2n+4)r$ every embedding of
$\complete{N}$ in $\ambient$ contains an $r$-component link $L_1\cup
L_2\cup\cdots\cup L_r$ satisfying
equation~\eqref{sequentiallinking.eq} for $i=1,\ldots,r$ (subscripts
taken mod $r$).
\end{enumerate}
\end{theorem}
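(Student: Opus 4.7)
The plan is to partition the vertex set of $\complete{N}$ into $r-1$ pairwise disjoint groups $V_1,\ldots,V_{r-1}$, each of size exactly $2n+4$. Applying the intrinsic linking of $\complete{2n+4}$ (due to Lovász--Schrijver, Taniyama, and Melikhov) to the restriction of the embedding on each $V_i$ yields a pair of disjoint $n$-spheres $A_i,B_i\subset V_i$ with $\linktwo{A_i}{B_i}\neq 0$. This produces $r-1$ linked pairs involving $2(r-1)$ pairwise disjoint $n$-spheres, from which the goal is to construct an $r$-component chain.

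The naive candidate is the sequence $A_1,B_1,A_2,B_2,\ldots,A_{r-1},B_{r-1}$: the within-group consecutive pairs $(A_i,B_i)$ are linked by construction, but the between-group consecutive pairs $(B_i,A_{i+1})$ are not automatically linked, and in any event there are too many components. To remedy this I would replace each between-group junction by a single sphere: for example, set $L_1=A_1$, $L_i=B_{i-1}\#A_i$ for $2\leq i\leq r-1$, and $L_r=B_{r-1}$, where the connected sums are performed inside $\complete{N}$ along $n$-dimensional tubes built from simplices of the complete $n$-complex that are disjoint from the rest of the chain. Additivity of the mod-$2$ linking number under connected sum then gives a formula for each $\linktwo{L_i}{L_{i+1}}$ in terms of the known value $\linktwo{A_i}{B_i}=1$ and various unknown cross-linkings between different groups.

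The main obstacle is to ensure that this construction yields a fully linked chain rather than allowing cancellations modulo $2$: at each step we need $\linktwo{L_i}{L_{i+1}}\neq 0$, so we must have enough freedom in the choice of connecting tube or of linked pair within each group to avoid the wrong parity. Fortunately flexibility is available --- within each $V_i$ the Lovász--Schrijver sum over all pairs of complementary $(n+2)$-subsets is odd, so generically many linked pairs $(A_i,B_i)$ are to be had, and in $\complete{N}$ many distinct tubes are available for each connected sum. As the author remarks in the introduction, this higher-dimensional surgery genuinely differs from the 1D prototype because $\partial D^n=S^{n-1}$ is connected for $n\geq 2$ but disconnected for $n=1$, so the tube construction will require dimension-specific care.

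Part~(b) proceeds in the same spirit but with $r$ disjoint groups of $2n+4$ vertices instead of $r-1$; the extra group contributes the additional linked pair needed to close the necklace cyclically and establish the wrap-around link $\linktwo{L_r}{L_1}\neq 0$, with the cyclic structure again controlled by choosing the linked pairs and connecting tubes carefully to avoid mod-$2$ cancellation at every junction.
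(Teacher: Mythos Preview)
Your overall plan matches the paper's: partition the vertices into $r-1$ blocks of size $2n+4$, extract a mod-$2$ linked pair $A_i\cup B_i$ from each block, and splice the pairs together. The gap is exactly where you flag it. You correctly identify that the obstruction is possible mod-$2$ cancellation at each junction, but ``flexibility is available'' is not an argument: you have not shown that any particular choice of linked pairs or connecting tubes forces $\linktwo{L_i}{L_{i+1}}=1$. The tube joining $B_{i-1}$ to $A_i$ may itself link $A_{i-1}$ or $B_i$ nontrivially, and neither the multiplicity of linked pairs in each block nor the multiplicity of available tubes comes with a mechanism for selecting one that works.

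The paper closes this gap with a specific \emph{four-to-three lemma}: given a four-component link $Y_1\cup X_1\cup X_2\cup Y_2$ with $\linktwo{X_i}{Y_i}=1$, there is an $n$-sphere $X$ with all vertices on $X_1\cup X_2$ such that $\linktwo{Y_1}{X}=\linktwo{X}{Y_2}=1$. The proof is a short case analysis. If a cross-linking $\linktwo{X_1}{Y_2}$ or $\linktwo{X_2}{Y_1}$ is already $1$, take $X=X_1$ or $X_2$. Otherwise the prism construction (your ``tube'') applied to a single $n$-simplex in each of $X_1,X_2$ produces auxiliary spheres $P_0,P_1$ with $X_1+X_2+P_0+P_1=0$ as chains; hence $[P_0]+[P_1]=1$ in each group $H_n(\ambient-Y_i;\integer/2)$, so exactly one of them is nonzero in each. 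If the same $P_j$ works for both $Y_i$ take $X=P_j$; if not, take $X$ to be the connected sum of $X_1$ with $P_1$ along their shared simplex. This last move---composing the tube with one of the original spheres rather than using the tube alone---is precisely the step your sketch lacks. The paper then applies this lemma inductively, one junction at a time, and for part~(b) obtains the necklace from an $(r+1)$-chain by one further application of the lemma to $L_r\cup L_{r+1}\cup L_1\cup L_2$, rather than starting afresh with $r$ blocks.
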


The link of Theorem~\ref{chains.th}(\ref{chain.item}) resembles a
chain, and the link of Theorem~\ref{chains.th}(\ref{necklace.item})
resembles a necklace, except that there is no requirement that
non-adjacent components do not also link. Our next result generalises
Lemma~2.2 of Fleming and Diesl~\cite{fleming-diesl2005}, and yields
links that resemble a bunch of keys on a keyring.  However, there is
again no requirement that the ``keys'' do not also link each other,
and following Flapan\etal~\cite{flapan-mellor-naimi2008} we call such
a link a \emph{generalised keyring}. Generalised keyrings will play a
crucial role in establishing our results for 2-component links, in
Theorems~\ref{linkingnumber.th}--\ref{modp.th}.

\begin{theorem}
\label{keyring.th}
For a natural number $r$ define
\[
\key(r)=4r^2(2n+4)+n+\left\lceil\frac{4r^2-2}{n}\right\rceil+1.
\]
Then every embedding
of \complete{\key(r)}\ in $\real^{2n+1}$ contains an $(r+1)$-component link
$R\cup L_1\cup L_2\cup\cdots\cup L_r$ such that
\[
\linktwo{R}{L_i}=1
\]
for $i=1,\ldots,r$. 
\end{theorem}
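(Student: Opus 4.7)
The plan is to partition the vertex set of $\complete{\key(r)}$ into $4r^2$ disjoint blocks $V_1,\ldots,V_{4r^2}$ of size $2n+4$, plus a residual block $W$ of size $n+\lceil(4r^2-2)/n\rceil+1$ on which the ring will be built. The induced subcomplex on each $V_i$ is a copy of $\complete{2n+4}$, so by the intrinsic linking theorem of Lov\'asz--Schrijver / Taniyama / Melikhov its restricted embedding in $\ambient$ contains a disjoint pair of $n$-spheres $A_i,B_i$ with $\linktwo{A_i}{B_i}=1$. The $4r^2$ linked pairs so obtained are pairwise vertex-disjoint and disjoint from $W$.

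The ring $R$ is constructed as a triangulated $n$-sphere supported on $W$. The size of $W$ is tuned so that $R$ can be realised as a triangulated $n$-sphere with $4r^2-2$ top-dimensional $n$-simplices: for $n=1$ this is simply a $4r^2$-gon, and for $n\geq 2$ one takes a suitable triangulation, for instance by performing iterated stellar subdivisions of the $n$-faces of $\partial\sigma^{n+1}$ until the required count is reached. The top simplices of $R$ will then be placed in correspondence with $4r^2-2$ of the pairs that are to supply the keys.

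The main step is the linking analysis, which should show that $R$ has mod-$2$ linking number $1$ with at least one of $A_i,B_i$ for at least $r$ values of $i$. For each pair one examines the parity vector $(\linktwo{R}{A_i},\linktwo{R}{B_i})\in(\Z{2})^2$, with the ``bad'' case being $(0,0)$. Following the strategy of Fleming and Diesl in the $n=1$ setting, each linking number is to be expressed as an intersection number against a chosen $(n+1)$-chain bounding $A_i$ or $B_i$, summed over the top simplices of $R$, after which a pigeonhole/counting argument --- for which the factor $4r^2$ (rather than just $r$) is there to absorb the loss --- picks out the $r$ surviving pairs. Setting $L_i$ to be whichever of $A_i,B_i$ has odd mod-$2$ linking with $R$ at each surviving index then produces the required $(r+1)$-component link.

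The main obstacle will be the linking analysis itself in higher dimensions. The two dimensional complications highlighted in the introduction --- that $\partial D^n$ is connected for $n\geq 2$, and that triangulations of $D^n$ are combinatorially richer --- prevent a direct translation of the graph-theoretic argument, and in particular force the triangulation-based construction of $R$ rather than a cycle/path-based one.
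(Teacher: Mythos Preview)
There is a fundamental gap in your linking analysis. The sphere $R$ you build solely on the residual block $W$ has no forced mod-$2$ linking relationship with any of the pairs $(A_i,B_i)$: one can easily produce embeddings in which each linked pair lies in a small ball far from $W$, so that $\linktwo{R}{A_i}=\linktwo{R}{B_i}=0$ for every $i$. The intrinsic linking of $\complete{2n+4}$ constrains only the pair $(A_i,B_i)$ within its own block; it says nothing about how either sphere links an external sphere supported on $W$. Consequently your ``parity vector'' can be identically $(0,0)$ and the pigeonhole step has nothing to count.

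In the paper the extra vertices on $W$ are used for a quite different purpose. They support not the ring itself but an auxiliary sphere $L$ with at least $4r^2$ top simplices (this is the reason for the term $n+\lceil(4r^2-2)/n\rceil+1$; note that by Corollary~\ref{n-sphere.cor} this many vertices yields at least $4r^2$, not $4r^2-2$, $n$-simplices). Via Lemma~\ref{connectingsphere.lem} this $L$ is used to build a connecting sphere $\sph$ meeting each $J_i$ (your $A_i$) in a single $n$-simplex $\delta_i$. The ring is then obtained from $\sph$ by splicing in a carefully chosen subset $\{J_{i_1},\ldots,J_{i_k}\}$ along these simplices, so that as a mod-$2$ chain it equals $\sph+\sum_\ell J_{i_\ell}$. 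Its linking with each $X_j$ is therefore controlled by the matrix $M_{ij}=\linktwo{J_i}{X_j}$, which has $1$'s on the diagonal. The combinatorial step is the Flapan--Mellor--Naimi row-sum argument (not Fleming--Diesl's, which would give only an exponential bound): some sum of rows of $M$ has at least $2r$ ones, and after correcting for the $\linktwo{\sph}{X_j}$ at least $r$ survive. The essential idea you are missing is that the ring must be assembled as a connect sum incorporating some of the $J_i$, precisely so that its linking numbers with the $X_j$ can be read off from this diagonal-dominant matrix.
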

 
Observe that $\key(r)$ grows quadratically in $r$ and linearly in 
$n$. The existence of generalised keyrings in embeddings of
\complete{N}\ for $N$ sufficiently large may be established by
following Fleming and Diesl's argument, or that of 
Flapan\etal~\cite[Lemma~1]{flapan-mellor-naimi2008}; the 
Fleming-Diesl argument leads to a bound that grows exponentially with
respect to $r$, and so we will
follow the argument of Flapan\etal, as this leads to the polynomial
bound given above.
For $n=1$ the term $n+\lceil(4r^2-2)/n\rceil+1$ of $\key$ is not 
needed, so it suffices to take $\key[1](r)=24r^2$. This bound follows
from Flapan\etal~\cite[Lemma~1]{flapan-mellor-naimi2008}, although they
do not state the bound explicitly.

Our last three results concern linking number in 2-component
links. The first extends Theorem~2 of Flapan~\cite{flapan2002}
to higher dimensions (although our proof will be based on a
technique from Lemma~2 of Flapan\etal~\cite{flapan-mellor-naimi2008},
as this leads to a better bound in higher dimensions):
\begin{theorem}
\label{linkingnumber.th}
Let $\lambda\in\naturalnumber$ be given, and let
\[
N= \key({2\lambda-1})+n+\left\lceil\frac{2\lambda-1}{n}\right\rceil +1.
\]
Then
every embedding of \complete{N}\ in $\real^{2n+1}$ contains a 
two-component link $L\cup J$ such that, for some orientation of
the components, $\link{L}{J}\geq\lambda$. 
\end{theorem}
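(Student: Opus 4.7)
The plan is to apply Theorem~\ref{keyring.th} with $r=2\lambda-1$ keys and then splice the keys together into a single $n$-sphere $J$ whose linking number with the ring $R$ is the sum of the individual linking numbers $\link{R}{L_i}$.

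More concretely, I would reserve $\key(2\lambda-1)$ of the $N$ vertices and apply Theorem~\ref{keyring.th} to the sub-complex they span, producing a generalised keyring $R\cup L_1\cup\cdots\cup L_{2\lambda-1}$ with $\linktwo{R}{L_i}=1$ for every $i$. Since the integral linking number reduces to the mod-two one, each $\link{R}{L_i}$ is odd, and hence nonzero; after re-orienting the $L_i$ as necessary I may assume $\link{R}{L_i}\geq 1$ for all $i$. The goal is then to splice $L_1,\ldots,L_{2\lambda-1}$ together using $2\lambda-2$ tubes that are disjoint from $R$ and from each other, producing an $n$-sphere $J$ satisfying
\[
\link{R}{J}=\sum_{i=1}^{2\lambda-1}\link{R}{L_i}\geq 2\lambda-1\geq \lambda,
\]
which would yield the theorem.

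The main obstacle is realising these tubes combinatorially inside $\complete{N}$ using only the remaining $n+\lceil(2\lambda-1)/n\rceil+1$ vertices. For $n=1$ the splicing is the classical symmetric-difference trick of Flapan, Mellor and Naimi~\cite{flapan-mellor-naimi2008}, in which a single extra vertex suffices to fuse two disjoint cycles by rerouting through the new vertex; this works precisely because $\partial D^1=S^0$ is disconnected. For $n\geq 2$ the boundary of an $n$-disk is the connected $(n-1)$-sphere, so no merging of two $n$-spheres can be achieved at a single new vertex, and a genuine $n$-dimensional tube must be triangulated for each splicing. The extra term $n+\lceil(2\lambda-1)/n\rceil+1$ should provide exactly the right amount of combinatorial room to build these tubes, presumably by sharing a common $(n-1)$-simplex on a fixed block of $n$ vertices and then introducing the other $\lceil(2\lambda-1)/n\rceil+1$ vertices to furnish the $2\lambda-2$ tubes, each new vertex being capable of carrying up to $n$ tubes based on the shared simplex. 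The proof will conclude with a general-position argument verifying that the tubes can be arranged to be disjoint from $R$, from each other, and from the unspliced portions of the keys, and that each splicing preserves the running sum of linking numbers with $R$.
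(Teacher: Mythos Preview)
Your overall strategy---build a generalised keyring with $2\lambda-1$ keys using Theorem~\ref{keyring.th}, orient the keys so that each $\link{R}{L_i}\geq 1$, and then splice the keys together---is exactly the paper's approach. The gap is in the last step, where you claim the spliced sphere $J$ will satisfy $\link{R}{J}=\sum_i\link{R}{L_i}$. This would require the connecting tubes to contribute nothing to the linking number with $R$, but we are working inside a \emph{fixed} embedding of $\complete{N}$: the tubes must be subcomplexes, and their homology class in $H_n(\ambient-R)$ is determined by the embedding, not by you. No general-position argument is available, because you are not permitted to perturb the embedding. Concretely, if $T$ is the tube joining $L_i$ to $L_{i+1}$, then $T$ together with the two excised discs forms an $n$-sphere $P$, and $\link{R}{J}$ picks up an uncontrolled summand $\link{R}{P}$ for each such tube. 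These could well be large and negative.

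The paper handles this as follows. The extra $n+\lceil(2\lambda-1)/n\rceil+1$ vertices are used, via Corollary~\ref{n-sphere.cor}, to build a single auxiliary $n$-sphere $S$ with at least $2\lambda-1$ top-dimensional simplices. Lemma~\ref{connectingsphere.lem} then produces from $S\cup L_1\cup\cdots\cup L_{2\lambda-1}$ a connecting sphere $\sph$ meeting each $L_i$ in a single $n$-simplex $\delta_i$, with opposite orientation. Setting $\sph_0=\sph$ and $\sph_i=\sph+\sum_{j\leq i}L_j$ gives genuine $n$-spheres in $\complete{N}$ whose linking numbers with $R$ form a strictly increasing sequence of $2\lambda$ integers. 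Since only $2\lambda-1$ integers have absolute value less than $\lambda$, some $\sph_i$ satisfies $|\link{R}{\sph_i}|\geq\lambda$. The point is that the unknown contribution $\link{R}{\sph}$ is absorbed into the initial term of the sequence, and the pigeonhole argument on partial sums succeeds regardless of its value. This is precisely why $2\lambda-1$ keys are needed rather than the $\lambda$ your argument would suggest.
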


Our last two results concern divisibility of the linking number.
Fleming and Diesl~\cite{fleming-diesl2005} showed that for $q=3$ or
$q$ a power of two, embeddings of sufficiently large complete graphs
in $\real^3$ necessarily contain 2-component links with linking number
a nonzero multiple of $q$, and Fleming~\cite{fleming2007} later
extended this to all $q\in\naturalnumber$. We now extend this further
to embeddings of complete $n$-complexes in \ambient, and by slightly
modifying Fleming's argument, reduce the number of vertices required
from exponentially many to only polynomially many.  We state and prove
two results in this direction: the first is for $q$ arbitrary, and the
second is for $q=p$ prime, where a simpler argument leads to a bound
with much slower growth.

\begin{theorem}
\label{modq.th}
Let $q$ be a positive integer. Then for $N$ sufficiently large 
every embedding of \complete{N}\ in $\real^{2n+1}$ contains a 
two-component link $R\cup S$ such that $\link{R}{S}=kq$ for some
$k\neq 0$. The number of vertices required grows no faster than
$c(n+1)\binom{2n+4}{n+1}q^{n+2}$ ($c$ a constant), which for fixed $n$ grows
polynomially in $q$. 
\end{theorem}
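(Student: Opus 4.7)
The plan is to combine Theorem~\ref{keyring.th} with a pigeonhole argument modulo $q$ and then perform an $n$-dimensional surgery to fuse several keys into a single $n$-sphere whose linking number with $R$ is a nonzero multiple of $q$. First I would apply Theorem~\ref{keyring.th} with parameter $r=q$ to extract from the given embedding of \complete{N}\ a $(q+1)$-component link $R\cup L_1\cup\cdots\cup L_q$ satisfying $\linktwo{R}{L_i}=1$ for every $i$. After reorienting each $L_i$ as necessary, the integer linking number $a_i:=\link{R}{L_i}$ is then a positive odd integer.

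Next I would form the partial sums $s_k=\sum_{i=1}^{k}a_i$ for $k=0,1,\ldots,q$. These are $q+1$ integers, so the pigeonhole principle produces indices $0\leq j<k\leq q$ with $s_j\equiv s_k\pmod q$; writing $T=\{j+1,\ldots,k\}$, the quantity $\sigma:=\sum_{i\in T}a_i=s_k-s_j$ is a strictly positive multiple of $q$. The remaining task is to combine the spheres $\{L_i:i\in T\}$ into a single embedded $n$-sphere $S\subset\ambient$, disjoint from $R$ and with $\link{R}{S}=\sigma$. For $n=1$ this is the standard band-sum construction: splice consecutive circles via arcs routed through $\ambient\setminus R$. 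For $n\geq 2$ I would remove a small $n$-disk from each consecutive pair of keys and glue in an $S^{n-1}\times I$ tube realised as an $n$-subcomplex of \complete{N}\ supported on fresh auxiliary vertices; because each tube lies in a regular neighbourhood of an arc disjoint from $R$, it contributes nothing to the linking number, and one obtains $\link{R}{S}=\sigma$, as required.

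The main obstacle is the surgery step in the case $n\geq 2$. Because $\partial D^n=S^{n-1}$ is connected, removing an $n$-disk from $L_i$ leaves a single disk rather than two arcs, and the connect-sum cylinder $S^{n-1}\times I$ must be built as an $n$-subcomplex of \complete{N}\ avoiding $R$ and the remaining $L_j$; this is precisely the higher-dimensional complication flagged in the introduction. Careful book-keeping of the auxiliary vertices consumed per tube, added to the $\key(q)$ vertices used by the keyring, shows that $N$ grows polynomially in $q$ for fixed $n$ and yields the stated upper bound $c(n+1)\binom{2n+4}{n+1}q^{n+2}$.
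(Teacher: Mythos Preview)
Your pigeonhole step on the partial sums $s_k$ is exactly right and matches the paper. The genuine gap is the surgery step. You write that the tube $S^{n-1}\times I$ ``lies in a regular neighbourhood of an arc disjoint from $R$'' and therefore ``contributes nothing to the linking number''. But the embedding of \complete{N}\ in \ambient\ is \emph{fixed}: the tube must be realised as a subcomplex of \complete{N}, and its image in \ambient\ is already determined. You have no freedom to route it through a neighbourhood of an arc avoiding $R$; its linking number with $R$ is simply whatever the given embedding dictates. Thus the connect sum $S$ you build satisfies $\link{R}{S}=\sigma+\tau$, where $\tau$ is the total contribution of the tubes, and there is no reason for $\tau$ to be a multiple of $q$ (let alone zero). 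The same problem arises if you try to use Lemma~\ref{connectingsphere.lem} and an auxiliary sphere $S_0$: the resulting connecting sphere $\sph$ has unknown linking number with $R$, so $[\sph]+\sum_{i\in T}[L_i]$ need not vanish mod $q$.

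The paper's fix is precisely to gain control over the tubes' linking numbers mod $q$, and this is where the extra factor of $q^n$ in the vertex count comes from. Instead of building the keyring from bare copies of $\complete{2n+4}$, the paper uses copies of the subdivided complex $\comdiv{2n+4}{q}$, so that each key $L_i$ is a $\comdiv{n+2}{q}$ and contains a triangulated disc with at least $q$ $n$-simplices. Applying Corollary~\ref{embeddedspheres.cor} between consecutive keys then yields $r\geq q$ candidate connecting spheres $P_{i1},\ldots,P_{ir}$, and a \emph{second} pigeonhole mod $q$ on their partial sums selects a combination $Q_i$ with $\link{R}{Q_i}\equiv 0\bmod q$. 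If some $Q_i$ has nonzero linking number we are done with $S=Q_i$; otherwise all the tubes contribute zero and the connect sum works. Your bookkeeping claim is therefore also off: if your tube argument worked, you would get roughly $\key(q)=O(q^2)$ vertices, far better than the $O(q^{n+2})$ stated; the stated bound is forced by the need for each of the $4q^2$ initial link pairs to live in a $\comdiv{2n+4}{q}$, which has $V(n,q)=O(q^n)$ vertices.
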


When $q=p$ is prime, a much simpler argument leads to a bound with growth
$O(p^2)$ instead of $O(q^{n+2})$: 
\begin{theorem}
\label{modp.th}
Let $p\in\naturalnumber$ be prime, and let
\[
N = \key(2p-1)+n+\left\lceil\frac{2p-3}{n}\right\rceil+1.
\]
Then every embedding of \complete{N}\ in $\real^{2n+1}$ contains a 
two-component link $L\cup J$ such that $\link{L}{J}=kp$ for some
$k\neq 0$. 
\end{theorem}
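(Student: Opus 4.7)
The plan is to combine Theorem \ref{keyring.th} with the Erd\H{o}s--Ginzburg--Ziv theorem and a simplicial tubing construction. The idea is to identify, inside an embedded keyring, a subset of keys whose linking numbers with the ring sum to a nonzero multiple of $p$, then tube those keys together into a single $n$-sphere realising this sum.

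I would apply Theorem \ref{keyring.th} with $r = 2p - 1$: since $N \geq \key(2p-1)$, every embedding of \complete{N}\ in \ambient\ contains a generalised keyring $R \cup L_1 \cup \cdots \cup L_{2p-1}$ with $\linktwo{R}{L_i} = 1$ for each $i$. After reorienting each $L_i$ if necessary, set $a_i := \link{R}{L_i}$, a positive odd integer. If some $a_i$ is divisible by $p$ then $L_i \cup R$ is already the required link. Otherwise every $a_i$ is coprime to $p$, and the Erd\H{o}s--Ginzburg--Ziv theorem yields a subset $S \subseteq \{1, \ldots, 2p-1\}$ of size $p$ with $M := \sum_{i \in S} a_i$ divisible by $p$; positivity of the $a_i$ then forces $M > 0$, so $M$ is a nonzero multiple of $p$.

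The next step is to realise $M$ as an actual linking number in \ambient. Using the remaining $n + \lceil (2p-3)/n \rceil + 1$ vertices of \complete{N}, I would tube the keys $\{L_i\}_{i \in S}$ together by means of $p - 1$ pairwise disjoint simplicial $(n+1)$-dimensional tubes arranged in a tree pattern, all disjoint from $R$. Removing the attaching $n$-disks from each $L_i$ and inserting the tubes produces a single embedded $n$-sphere $L$ representing the class $\sum_{i \in S} [L_i]$ in $H_n(\ambient - R)$; then $\link{L}{R} = M = kp$ with $k \neq 0$, and $L \cup R$ is the required link.

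The main obstacle is the tubing step. Organising $p - 1$ pairwise disjoint simplicial $(n+1)$-tubes using only $n + \lceil (2p-3)/n \rceil + 1$ additional vertices, while ensuring that they avoid $R$ and the remaining keys, is a delicate combinatorial construction and is what dictates the precise form of the bound on $N$. I would expect this step to follow the pattern of the tubing arguments underlying Theorems \ref{keyring.th} and \ref{linkingnumber.th}, the complications relative to the $n = 1$ case arising because $\partial D^n = S^{n-1}$ is connected and higher-dimensional disks require more vertices to triangulate.
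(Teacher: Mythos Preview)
Your overall strategy---find a keyring, use number theory to select keys whose linking numbers sum to a nonzero multiple of $p$, then tube them into a single sphere---matches the paper's, and Erd\H{o}s--Ginzburg--Ziv is a perfectly good substitute for the paper's subsequence-sum lemma. But there is a genuine gap in the tubing step, and it is not the vertex-counting issue you flag.

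The problem is your claim that the tubed sphere $L$ represents the class $\sum_{i\in S}[L_i]$ in $H_n(\ambient-R)$. This is false in general: the tubes themselves contribute to the homology class. Concretely, the paper's tubing mechanism (Lemma~\ref{connectingsphere.lem}) uses the extra $n+\lceil(2p-3)/n\rceil+1$ vertices to build an auxiliary sphere $S_0$, and from it a sphere $\sph$ meeting each $L_i$ in a single $n$-simplex $\delta_i$. Taking the connect sum of $\sph$ with the chosen keys along the $\delta_i$ produces a sphere whose class is $[\sph]+\sum_{i\in S}[L_i]$, not $\sum_{i\in S}[L_i]$. The term $[\sph]=\link{R}{\sph}$ is an uncontrolled integer determined by the embedding, and it can destroy divisibility by $p$. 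The same issue arises if you tube keys directly to one another via Corollary~\ref{embeddedspheres.cor}: each tube is (part of) an $n$-sphere $P_j$ with its own unknown linking number $\link{R}{P_j}$. Since we are working inside a fixed embedding of \complete{N}, we cannot choose the tubes to avoid linking $R$.

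The paper's proof is designed precisely to absorb this extra term. It builds $\sph$ touching \emph{all} $2p-1$ keys, then uses the subsequence-sum Lemma~\ref{subsequencesums.lem} (which needs $p$ prime) twice: first to find $A\subseteq\{1,\ldots,p-1\}$ with $\sum_{i\in A}[L_i]\equiv -[\sph]\bmod p$, so that $S_1=\sph+\sum_{i\in A}L_i$ has $[S_1]\equiv 0$; and second to find a nonempty $C\subseteq\{p,\ldots,2p-1\}$ with $\sum_{i\in C}[L_i]\equiv 0$, so that $S_2=S_1+\sum_{i\in C}L_i$ also has $[S_2]\equiv 0$ but $[S_2]>[S_1]$. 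One of $S_1,S_2$ then has linking number a nonzero multiple of $p$. Your EGZ approach could be made to work, but only after first cancelling $[\sph]$ in this fashion; as written, the argument does not go through.
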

Since the proof of Theorem~\ref{modp.th} is simpler than that of Theorem~\ref{modq.th} we will prove it first, in Section~\ref{modp.sec}, and then prove Theorem~\ref{modq.th} later in Section~\ref{modq.sec}.

For $n=1$, Theorem~\ref{modq.th} may be proved using a total of
\[
4q^2(6+15(q-1)) = 12q^2(5q-3)
\]
vertices, in contrast to the exponentially many required by
Fleming~\cite[Theorem 3.1]{fleming2007}. This reduction to polynomial
growth comes about for two reasons. The first is that we use
Flapan\etal's rather than Fleming and Diesl's construction of a
generalised keyring, as this requires only polynomially many rather
than exponentially many vertices. The second savings comes from
modifying the method by which the keys of the keyring are combined, so
that each key requires roughly $3q$ vertices rather than $O(q^{\log
  q})$. In fact it should be possible to reduce the number of vertices
required further, by a factor of about $2/3$, because for $n=1$ our
method really only requires the keys to have about $2q$ vertices.

For large $n$ 
Stirling's formula may be used to show that asymptotically we have
\[
c(n+1)\binom{2n+4}{n+1}q^{n+2}\sim C\sqrt{n}4^n q^{n+2}.
\]
Thus the number of vertices required grows at most exponentially with
respect to $n$.  

\subsection{Discussion}

We briefly discuss the existence of more complex links in embeddings
of large complete complexes in \ambient.

\subsubsection{More complex keyrings}

Each of Theorems~\ref{linkingnumber.th}--\ref{modp.th} is proved by
converting a suitable generalised keyring $R\cup L_1\cup\cdots\cup
L_m$ into a two component link $R\cup L'$, where $L'$ is formed as a
connect sum of some of the $L_i$ (and perhaps an additional disjoint
component $S$).  Starting with a generalised keyring with $mr$ keys,
and working with them $m$ at a time, we may therefore construct a link
$R\cup L'_1\cup\cdots\cup L'_r$ in which each linking number
\link{R}{L'_i}\ satisfies the conclusion of the theorem. It follows
for example that for $q\in\naturalnumber$ and $N$ sufficiently large,
every embedding of \complete{N}\ in \ambient\ contains a link $R\cup
L'_1\cup\cdots\cup L'_r$ in which each linking number
\link{R}{L'_i}\ is a nonzero multiple of $q$.

\subsubsection{More complex linking patterns}

Flapan\etal~\cite[Theorem 1]{flapan-mellor-naimi2008} show that
intrinsic linking of graphs in $\real^3$ is arbitrarily complex in the
following sense: Given natural numbers $r$ and $\lambda$, for $N$
sufficiently large every embedding of $K_N$ in $\real^3$ contains an
$r$-component link in which all pairwise linking numbers are at least
$\lambda$ in absolute value.  We believe that, with minor adaptions to
higher dimensions, their work shows that intrinsic linking of
$n$-complexes in \ambient\ is arbitrarily complex in this sense
also. The main adaption needed is to use our
Lemma~\ref{connectingsphere.lem} in place of the 1-dimensional
construction it replaces in higher dimensional arguments. This
adaption requires the addition of some extra vertices (to create the
auxiliary sphere $S_0$ of the lemma), and is illustrated in the proofs
of Lemma~\ref{makekeyring.lem} and
Theorem~\ref{linkingnumber.th}. These are based respectively on their
Lemma~1 and a technique from the proof of their Lemma~2.

A step in their argument is to show that, for $N$ sufficiently large,
every embedding of $K_N$ in $\real^3$ contains a link
$X_1\cup\cdots\cup X_m\cup Z_1\cup\cdots\cup Z_m$ such that
\[
\linktwo{X_i}{Z_j}=1
\]
for $1\leq i,j\leq m$
(Flapan\etal~\cite[Prop.\ 1]{flapan-mellor-naimi2008}).  We observe
that this step certainly extends to embeddings of complete
$n$-complexes in \ambient, as their proof is a purely
combinatorial argument that depends only on their Lemma~1 and the
existence of generalised keyrings, which we extend here to higher
dimensions as Lemma~\ref{makekeyring.lem} and Theorem~\ref{keyring.th}
respectively.

\subsection{Organisation}

The paper is organised as follows. We begin with some technical 
preliminaries in Section~\ref{spheres.sec}, and then prove 
Theorems~\ref{chains.th} and~\ref{keyring.th} concerning many-component
links in Section~\ref{rlinks.sec}. In Section~\ref{2-component.sec}
we prove our first two results on linking numbers in 
2-component links, Theorems~\ref{linkingnumber.th} and~\ref{modp.th}.

We then construct some triangulations of an $M$-simplex in
Section~\ref{triangulations.sec}, as further technical preliminaries
needed for our proof of our divisibility result
Theorem~\ref{modq.th}. This result is proved in
Section~\ref{modq.sec}. As a further application of the triangulations
of Section~\ref{triangulations.sec} we conclude the paper in
Section~\ref{linking-alternate.sec} with an alternate proof of
Theorem~\ref{linkingnumber.th}, without the polynomial bound on the
number of vertices required. This introduces an additional technique
that may be used to prove Ramsey-type results on intrinsic linking
of $n$-complexes. 

\section{Technical preliminaries I: Spheres and discs in \complete{N}}
\label{spheres.sec}

In this section we construct some subcomplexes of \complete{N}\
that are needed for our proofs. As an aid to understanding,
in Section~\ref{tactics.sec} we
first illustrate the role the corresponding subcomplexes of $K_N$
play in studying intrinsic linking of graphs in \rthree.

\begin{figure}
\begin{center}
\psfrag{a}{(a)}
\psfrag{b}{(b)}
\psfrag{c}{(c)}
\psfrag{d}{(d)}
\psfrag{X}{$X$}
\psfrag{X1}{$X_1$}
\psfrag{X2}{$X_2$}
\psfrag{Y1}{$Y_1$}
\psfrag{Y2}{$Y_2$}
\psfrag{v2}{$v_2$}
\psfrag{v1}{$v_1$}
\psfrag{w2}{$w_2$}
\psfrag{w1}{$w_1$}
\includegraphics[width=\textwidth]{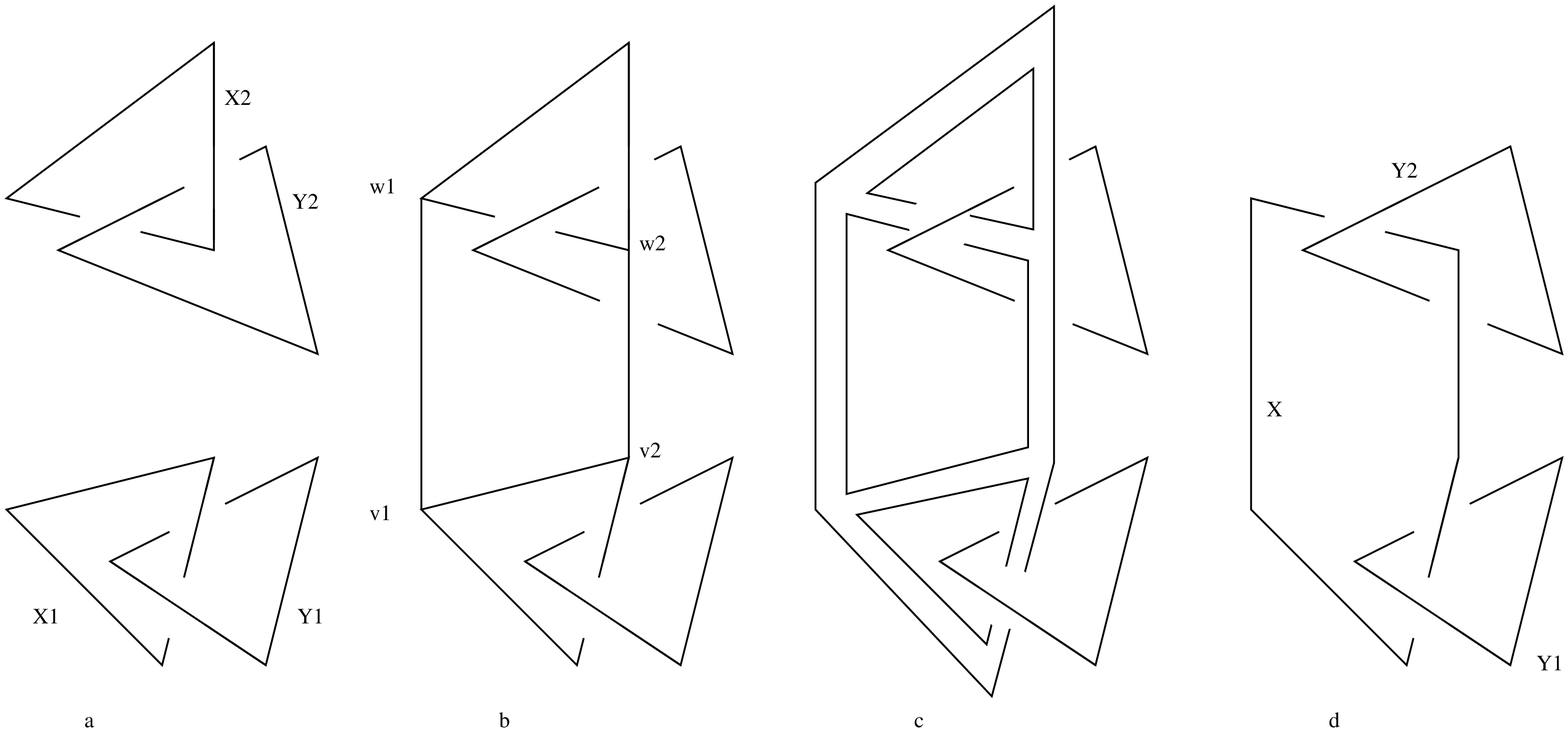}
\caption{Illustrating the proof of Lemma~\ref{4to3mod2.lem} (the
  four-to-three lemma for mod two linking number) in the case $n=1$.}
\label{linkinglemma.fig}
\end{center}
\end{figure}

\subsection{Tactics}
\label{tactics.sec}

A common technique of~\citeramsey\ in proving Ramsey-type results for
graphs is the use of connect sums and the additivity of linking
number.  These may be used to convert a link with several components
to one with fewer components, but more complicated linking
behaviour. We illustrate this technique by sketching the proofs for
$n=1$ of the four-to-three Lemmas~\ref{4to3mod2.lem}
and~\ref{4to3.lem}. The $n=1$ case of Lemma~\ref{4to3.lem} corresponds
to Lemma~2 of Flapan~\cite{flapan2002}, and Lemma~\ref{4to3mod2.lem}
is a mod two version of this result that is similar to Lemma~1 of
Flapan\etal~\cite{flapan-pommersheim-foisy-naimi2001}.

Suppose that the 4-component link $Y_1\cup X_1\cup X_2\cup Y_2$ in
Figure~\ref{linkinglemma.fig}(a) is part of an embedding of $K_N$ in
\rthree, and that we wish to replace the cycles $X_1$ and $X_2$ with a
single cycle $X$ linking both $Y_1$ and $Y_2$ mod two. We choose vertices
$v_1,v_2$ on $X_1$ and $w_1,w_2$ on $X_2$, and consider the edges
$(v_i,w_i)$ as in Figure~\ref{linkinglemma.fig}(b). Together with
$X_1$ and $X_2$ these give us a collection of cycles
(Figure~\ref{linkinglemma.fig}(c)) whose linking numbers with each of
$Y_1$ and $Y_2$ sum to zero mod two; and taking the connect sum of a
suitably chosen subset as in Figure~\ref{linkinglemma.fig}(d) we get the
desired cycle $X$.

Working now with integer co-efficients, consider the link $Y_1\cup
X_1\cup X_2\cup Y_2$ in Figure~\ref{integrallinking.fig}(a).  Our goal
here is to replace this with a three component link $L\cup Z\cup W$
such that \link{L}{Z}\ is nonzero, and \link{L}{W} is at least as
large as \link{X_2}{Y_2} in absolute value. We again do this by
constructing a series of cycles that sum to zero with $X_1$ and $X_2$,
but now in order to ensure we can find one linking $Y_2$ with the 
correct sign it is necessary to have at least 
$q>|\link{X_2}{Y_2}|$ such cycles. This is achieved by 
choosing vertices $v_1,\ldots,v_q$ on $X_1$ and 
$w_1,\ldots,w_q$ on $X_2$, such $v_1,\ldots,v_q$ are encountered
in increasing order following the orientation of $X_1$, and
$w_1,\ldots,w_q$ are encountered
in decreasing order following the orientation of $X_2$.
The needed cycles are formed by connecting 
$X_1$ and $X_2$ using the edges $(v_1,w_1),\ldots,(v_q,w_q)$,
as in Figure~\ref{integrallinking.fig}(b),
and a suitable connect sum 
(Figure~\ref{integrallinking.fig}(c)) then
gives us the desired 3-component link. 

\begin{figure}
\begin{center}
\psfrag{a}{(a)}
\psfrag{b}{(b)}
\psfrag{c}{(c)}
\psfrag{X}{$X$}
\psfrag{X1}{$X_1$}
\psfrag{X2}{$X_2$}
\psfrag{Y1}{$Y_1$}
\psfrag{Y2}{$Y_2$}
\psfrag{v0}{$v_1$}
\psfrag{v1}{$v_2$}
\psfrag{v2}{$v_3$}
\psfrag{v3}{$v_4$}
\psfrag{w0}{$w_1$}
\psfrag{w1}{$w_2$}
\psfrag{w2}{$w_3$}
\psfrag{w3}{$w_4$}
\includegraphics[width=\textwidth]{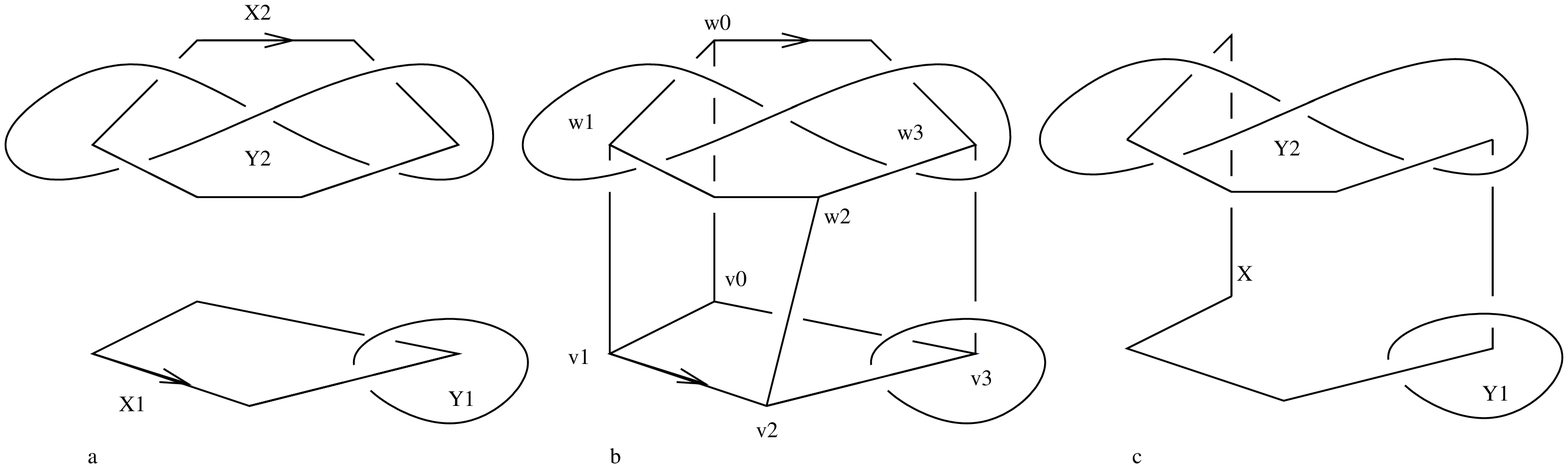}
\caption{Illustrating the proof of Flapan's Lemma~2, the $n=1$ case of
  our Lemma~\ref{4to3.lem} (the four-to-three lemma for integral
  linking number).}
\label{integrallinking.fig}
\end{center}
\end{figure}

To prove analogous results in higher dimensional dimensions we will
regard the intervals $[v_1,v_q]$ and $[w_1,w_q]$ as identically
triangulated discs $D_1\subseteq X_1$ and $D_2\subseteq X_2$, and the
correspondence $v_i\mapsto w_i$ as an orientation reversing
simplicial isomorphism $\phi:D_1\to D_2$ mapping one triangulation to the
other. Given this data we then construct the collection of edges
$(v_i,w_i)$, which we regard as a complex $\cyl$ homeomorphic to
$D_1^{(0)}\times I$ realising the restriction of $\phi$ to the zero
skeleton of $D_1$. The pair of edges $(v_i,w_i)$ and
$(v_{i+1},w_{i+1})$ may then be seen as a copy of $S^0\times I$, which
we cap with the intervals $[v_i,v_{i+1}],[w_i,w_{i+1}]$ to create
a copy of $S^1$. 

Triangulations of an interval have very simple combinatorics, and in
Figure~\ref{integrallinking.fig}(b) it didn't matter that there was
an additional vertex between $w_2$ and $w_3$. Thus, Flapan's argument
only requires that each component has at least $q$ vertices. In order
to use similar techniques when $n\geq 2$ we will impose the
more stringent requirement that our link components contain identically
triangulated copies of $D^n$. Additional work will then be required
to ensure that our links contain such discs.

\subsection{Cylinders, spheres and discs in \complete{N}}
\label{discs.sec}

We now construct the needed subcomplexes of \complete{N}.

\begin{lemma}
\label{crossed.lem}
Let $(S_1,D_1)$ and $(S_2,D_2)$ be disjoint subcomplexes of 
\complete{N}\ each homeomorphic to $(S^n,D^n)$. Suppose that there
is a simplicial isomorphism
\[
\phi:D_1\to D_2.
\]
Let $D_i^{(n-1)}$ be the $(n-1)$-skeleton of $D_i$. Then there is
a subcomplex $\cyl$ of \complete{N}\ and a homeomorphism
\[
\Phi : D_1^{(n-1)}\times I\rightarrow \cyl
\]
such that 
\begin{enumerate}
\item
all vertices of $\cyl$ lie on $D_1\cup D_2$;
\item
$\cyl\cap S_i= D_i^{(n-1)}$ for $i=1,2$;
\item
$\Phi$ restricts to the identity on $D_1^{(n-1)}\times\{0\}$; and
\item
$\Phi=\phi$ on $D_1^{(n-1)}\times\{1\}$.
\end{enumerate}
\end{lemma}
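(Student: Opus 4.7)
My plan is to construct $\cyl$ as a triangulated prism interpolating $D_1^{(n-1)}$ and $D_2^{(n-1)}$ via $\phi$, exploiting the fact that $\complete{N}$ is the full $n$-skeleton of an $(N-1)$-simplex and therefore contains every simplex of dimension at most $n$ whose vertices lie in $\complete{N}^{(0)}$.

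First I would fix a total order on the vertices of $D_1$, which $\phi$ transports to a compatible total order on the vertices of $D_2$. For each simplex $\sigma=[v_0,v_1,\dots,v_k]$ of $D_1^{(n-1)}$ (so $k\le n-1$), with vertices listed in increasing order, I would include in $\cyl$ the $k+1$ simplices
\[
\tau_i(\sigma)=[v_0,\dots,v_i,\phi(v_i),\phi(v_{i+1}),\dots,\phi(v_k)],\qquad i=0,1,\dots,k,
\]
together with all of their faces. Each $\tau_i(\sigma)$ has $k+2\le n+1$ vertices, and those vertices are pairwise distinct because $S_1\cap S_2=\emptyset$ forces $D_1\cap D_2=\emptyset$, while $\phi$ is a bijection; thus $\tau_i(\sigma)$ is a genuine simplex of dimension at most $n$, and so lies in $\complete{N}$.

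The collection $\{\tau_i(\sigma):0\le i\le k\}$ is exactly the standard triangulation of the prism $\sigma\times I$ over an ordered simplex, and I would define $\Phi$ prism-by-prism as the obvious PL homeomorphism that is the identity on $\sigma\times\{0\}$ and agrees with $\phi$ on $\sigma\times\{1\}$. The main technical point I expect to have to check is coherence on overlaps: because the vertex ordering on each $\sigma$ restricts to the vertex ordering on any sub-face $\sigma'$, the prism triangulation of $\sigma\times I$ restricts to the prism triangulation of $\sigma'\times I$ on the common boundary, so the pieces of $\cyl$ assemble into a well-defined subcomplex and the pieces of $\Phi$ assemble into a well-defined homeomorphism onto it.

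The four listed conditions are then more or less immediate. By construction every vertex of $\cyl$ is some $v_j$ or $\phi(v_j)$, giving (1); and (3) and (4) hold by the definition of $\Phi$ on the top and bottom faces of each prism. For (2), any simplex of $\cyl$ lying in $S_1$ must have all of its vertices in $D_1$ (vertices of $D_2$ are disjoint from $S_1$), and inspection of the $\tau_i(\sigma)$ shows that the simplices of $\cyl$ supported on $D_1$-vertices are precisely the simplices of $D_1^{(n-1)}$; the analogous argument handles $S_2$.
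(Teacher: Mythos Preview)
Your proposal is correct and is essentially the same construction as the paper's: both build $\cyl$ via the standard ordered-prism triangulation $[v_{i_0},\dots,v_{i_j},\phi(v_{i_j}),\dots,\phi(v_{i_k})]$ after fixing a global vertex order on $D_1$, and both use the fact that this triangulation restricts correctly to faces to glue the prisms coherently. The paper spells out the face classification of $C(\delta)$ a bit more explicitly, but the argument is the same.
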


We note that the subcomplex $\cyl$ may be regarded as the mapping
cylinder of the restriction of $\phi$ to the $(n-1)$-skeleton.

\begin{proof}
To construct $\cyl$ we use the subdivision of $\Delta^k\times
I$ into $(k+1)$-simplices used in the proof of the homotopy invariance
of singular homology (see for example
Hatcher~\cite[p.\ 112]{hatcher-at}). Label the vertices of $D_1$
arbitrarily as $v_0,v_1,\ldots,v_M$, and label the vertices of $D_2$
as $w_0,w_1,\ldots,w_M$ so that $w_i=\phi(v_i)$.  Now, for each
$k$-simplex $\delta = [v_{i_0},\ldots,v_{i_k}]$ of $D_1^{(n-1)}$, with
$i_0<i_1<\cdots<i_k$, we have
\[
\delta\times I\cong \crossed{\delta}=
     \bigcup_{j=0}^k [v_{i_0},\ldots,v_{i_j},w_{i_j},\ldots,w_{i_k}].
\]
Since $k\leq n-1$ each $(k+1)$ simplex involved in this union is a
simplex of $\complete{N}$, and we obtain a subcomplex of
\complete{N}\ homeomorphic to $\delta\times I$, meeting $D_1$ and
$D_2$ in $\delta\times\{0\}=\delta$ and
$\delta\times\{1\}=\phi(\delta)$ respectively.
In addition, all vertices of $C(\delta)$ belong to
$D_1\cup D_2$. 

Let $\delta_l$ denote the simplex 
$[v_{i_0},\ldots,\hat{v}_{i_l},\ldots,v_{i_k}]$ belonging to $\partial\delta$,
where the hat indicates that $v_{i_l}$ is omitted. 
A $k$-simplex belonging to $\crossed\delta$ is of one of several possible
types:
\begin{enumerate}
\item
the simplex $[v_{i_0},\ldots,v_{i_k}]=\delta$ or 
$[w_{i_0},\ldots,w_{i_k}]=\phi(\delta)$;
\item
one of the simplices
\[
[v_{i_0},\ldots,\hat{v}_{i_l},\ldots,v_{i_j},w_{i_j},\ldots,w_{i_k}]
\]
or 
\[
[v_{i_0},\ldots,v_{i_j},w_{i_j},\ldots,\hat{w}_{i_l},\ldots,w_{i_k}]
\]
with $l$ fixed and $j\neq l$, which together make up $\crossed{\delta_l}$;
\item
a simplex of the form $[v_{i_0},\ldots,v_{i_j},w_{i_{j+1}},\ldots,w_{i_k}]$,
which is interior to $\delta\times I$. 
\end{enumerate}
Inductively, this implies that if $\delta'$ is a simplex of $\delta$,
then $\crossed{\delta'}$ is a subcomplex of \crossed\delta, and the
diagram
\[
\begin{CD}
\delta'\times I @>>>   \delta\times I \\
@V{\cong}VV            @VV{\cong}V \\
\crossed{\delta'} @>>> \crossed\delta
\end{CD}
\]
commutes. Moreover, our construction ensures that $\crossed{\delta_1}$
and $\crossed{\delta_2}$ are disjoint unless $\delta_1$ and $\delta_2$
intersect, in which case $\crossed{\delta_1}\cap\crossed{\delta_2} =
\crossed{\delta_1\cap\delta_2}$.  Thus, taking the union of
\crossed\delta\ over all $(n-1)$-simplices of $D_1^{(n-1)}$ we obtain
a subcomplex $\cyl$ of \complete{N}\ homeomorphic to $D_1^{(n-1)}\times I$ 
meeting $S_i$ in $D_i^{(n-1)}$ for each $i$, and the homeomorphism
$\Phi$ may be constructed satisfying the given conditions.
\end{proof}

\begin{corollary}
\label{embeddedspheres.cor}
Let $(S_1,D_1)$ and $(S_2,D_2)$ be disjoint subcomplexes of 
\complete{N}\ each homeomorphic to $(S^n,D^n)$. Suppose that there
is an orientation reversing simplicial isomorphism
\[
\phi:D_1\to D_2,
\]
and let $\Delta_1,\ldots,\Delta_k$ be the $n$-simplices of $D_1$. 
Then there are subcomplexes $P_0,P_1,\ldots,P_k$ of \complete{N}\ such
that
\begin{enumerate}
\item
\label{vertices.item}
the  vertices of $P_0,P_1,\ldots,P_k$ all lie on $S_1\cup S_2$;
\item
\label{spheres.item}
$P_i\cong S^n$ for each $i$;
\item
$P_0\cap S_j = \overline{S_j\setminus D_j}$ for $i=1,2$;
\item
\label{ends.item}
$P_i\cap S_1 = \Delta_i$, $P_i\cap S_2 = \phi(\Delta_i)$ for $i\geq 1$; and
\item
\label{zerosum.item}
as an integral chain we have
\[
S_1+S_2+\sum_{i=0}^k P_i = 0.
\]
\end{enumerate}
\end{corollary}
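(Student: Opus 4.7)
The plan is to use Lemma~\ref{crossed.lem} to obtain a simplicial cylinder $\cyl \subset \complete{N}$ with a homeomorphism $\Phi \colon D_1^{(n-1)} \times I \to \cyl$ realising $\phi$ on the end slices, and then to carve $\cyl$ into pieces that pair up $n$-discs into $n$-spheres. Concretely, for $i \geq 1$ I would take
\[
P_i = \Delta_i \cup \Phi(\partial \Delta_i \times I) \cup \phi(\Delta_i),
\]
and set
\[
P_0 = \overline{S_1 \setminus D_1} \cup \Phi(\partial D_1 \times I) \cup \overline{S_2 \setminus D_2}.
\]
Since $(S_j, D_j) \cong (S^n, D^n)$ as pairs, each complement $\overline{S_j \setminus D_j}$ is homeomorphic to $D^n$, so every $P_i$ is a pair of $n$-discs glued along a common $(n-1)$-sphere and hence homeomorphic to $S^n$. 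Properties~(\ref{vertices.item})--(\ref{ends.item}) of the corollary then follow immediately from the corresponding conclusions of Lemma~\ref{crossed.lem}.

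For the chain identity~(\ref{zerosum.item}) I would first orient $S_1$ and $S_2$, inducing orientations on each $D_j$, $\overline{S_j \setminus D_j}$, $\Delta_i$, and $\phi(\Delta_i)$. I would then orient each $P_i$ with $i \geq 1$ so that $P_i$ restricts to $-\Delta_i$ on $\Delta_i$, and orient $P_0$ so that it restricts to $-\overline{S_1 \setminus D_1}$. The crucial observation is that, because $\phi$ is orientation-reversing, the extension of the chosen orientation across the cylinder then forces $P_i$ to restrict to $-\phi(\Delta_i)$ on the other disc, and likewise $P_0$ to $-\overline{S_2 \setminus D_2}$. Consequently, in the sum $S_1 + S_2 + \sum_i P_i$ every $n$-simplex contained in $S_1 \cup S_2$ has coefficient $+1 + (-1) = 0$.

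It remains to check the $n$-simplices belonging to $\cyl$. Each such simplex lies inside $\Phi(\sigma \times I)$ for a unique $(n-1)$-simplex $\sigma$ of $D_1^{(n-1)}$, and $\sigma$ sits as a face of exactly two of the $n$-cells $\Delta_1, \ldots, \Delta_k, \overline{S_1 \setminus D_1}$---two of the $\Delta_i$ if $\sigma$ is interior to $D_1$, or one $\Delta_i$ together with the complement if $\sigma \subset \partial D_1$. Because these two cells induce opposite orientations on $\sigma$, the two spheres $P$ containing $\Phi(\sigma \times I)$ carry opposite orientations on it, and the contributions cancel. The main obstacle I anticipate is the orientation bookkeeping: correctly tracking how the orientation-reversal of $\phi$ propagates through the cylinder, and confirming that the cancellation on the ``disc side'' and the cancellation on the ``cylinder side'' both work out with consistent sign conventions.
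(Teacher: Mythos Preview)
Your proposal is correct and follows essentially the same route as the paper: the paper defines the $P_i$ exactly as you do, verifies (\ref{vertices.item})--(\ref{ends.item}) directly from Lemma~\ref{crossed.lem}, orients each $P_i$ so that its intersection with $S_1$ receives the opposite orientation, uses the orientation-reversing property of $\phi$ to see that the same then holds on the $S_2$ side, and finally observes that each $(n-1)$-simplex of $D_1$ lies in exactly two of the relevant $n$-cells with opposite induced orientations, so the cylinder contributions cancel in pairs. Your anticipated ``main obstacle'' is precisely the point the paper spends its effort on, and your outline of how to resolve it matches theirs.
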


\begin{remark}
\label{disjoint.rem}
Condition~\eqref{vertices.item} implies that if $A$ is a subcomplex of
\complete{N}\ disjoint from $S_1\cup S_2$, then $A$ is disjoint
from $P_i$ for all $i$.
\end{remark}

\begin{proof}
We obtain the required spheres $P_i$ using the subcomplex $\cyl$ and
homeomorphism $\Phi:D_1^{(n-1)}\times I\to\cyl$ constructed in
Lemma~\ref{crossed.lem} above.  For each $i=1,\ldots,k$ let
\[
P_i = \Delta_i\cup\phi(\Delta_i)\cup\Phi(\partial\Delta_i\times I),
\]
and let
\[
P_0 = \overline{S_1\setminus D_1}\cup \overline{S_2\setminus D_2}
\cup \Phi(\partial D_1\times I).
\]
Then Lemma~\ref{crossed.lem} ensures that each $P_i$ is a
subcomplex of \complete{N}\ satisfying 
conditions~\eqref{vertices.item}--\eqref{ends.item} above.

To obtain~\eqref{zerosum.item} we must orient each sphere $P_i$.
For $i\geq 1$ we orient $P_i$ so that $\Delta_i$ receives the opposite
orientation from $P_i$ as it does from $S_1$, and we orient $P_0$ 
analogously using the disc $\overline{S_1\setminus D_1}$. This ensures
that $\phi_\sharp\Delta_i$ receives opposite orientations from $S_2$ and
$P_i$ also, since $\phi$ is orientation reversing on $\Delta_i$ with
respect to both $S_2$ and $P_i$ (on $P_i\cong S^n$ it is induced
by reflection in an equatorial $S^{n-1}$). Similar considerations
apply to $P_0$, as $\phi$ extends to a (not necessarily simplicial)
orientation reversing homeomorphism 
$(S_1,\overline{S_1\setminus D_1})\to(S_2,\overline{S_2\setminus D_2})$.

It remains to consider the subcomplexes $\crossed{\delta}$, for
$\delta$ an $(n-1)$-simplex of $D_1$. Each such simplex belongs to two
$n$-simplices of $S_1$, and receives opposite orientations from each
(since $\partial S_1=0)$;
consequently, each subcomplex $\crossed{\delta}$ belongs to two 
spheres $P_i$ and $P_j$, and is also oppositely oriented by each. This
completes the proof.
\end{proof}

\begin{remark}
The $n$-spheres $P_i$ of Corollary~\ref{embeddedspheres.cor}
may be expressed explicitly as chains
as follows. We assume throughout that all simplices of $D_1$ are written
with the labels on their vertices in increasing order. 

For each $k$-simplex $\delta = [v_{i_0},\ldots,v_{i_k}]$ of
$D_1^{(n-1)}$ define
\[
\prism(\delta) = \sum_{j=0}^k (-1)^j 
            [v_{i_0},\ldots,v_{i_j},w_{i_j},\ldots,w_{i_k}].
\]
Let $\eps_i\in\{\pm1\}$ be the co-efficient of 
$\Delta_i$ in the chain $S_1$, and set
\[
P_i = -\eps_i(\Delta_i+\prism(\partial\Delta_i)-\phi_\sharp(\Delta_i))
\]
for $i\leq 1$, and 
\[
P_0 = (D_1-S_1)+(D_2-S_2)+\prism\partial D_1. 
\]
We verify below that $\partial P_i = 0$, and that $S_1+S_2+\sum_i P_i = 0$. 

Suitably adapted, the calculation
on page~112 of Hatcher~\cite{hatcher-at} shows that 
\[
\partial\prism = \phi_\sharp-\id_\sharp - \prism\partial,
\]
so for $i\geq 1$ we have
\begin{align*}
-\eps_i\partial P_i &= \partial\Delta_i +\partial\prism\partial\Delta_i 
                             -\partial\phi_\sharp\Delta_i    \\
  &= \partial\Delta_i + \phi_\sharp\partial\Delta_i
        -\id_\sharp\partial\Delta_i - \prism\partial^2\Delta_i 
                             -\phi_\sharp\partial\Delta_i   \\
  &= 0.
\end{align*}
Similarly
\begin{align*}
\partial P_0 &= \partial(D_1 -S_1)+\partial(D_2-S_2) 
                 + \partial\prism\partial D_1 \\
             &= \partial D_1+\partial D_2
                   +\phi_\sharp\partial D_1 -\id_\sharp\partial D_1
                                    -\prism\partial^2D_1 \\
             &= \partial D_1 + \partial D_2 -\partial D_2 - \partial D_1
                & (\phi_\sharp\partial D_1 = \partial\phi_\sharp D_1
                                           = -\partial D_2) \\
             &=0,
\end{align*}
as required. Summing, we have $D_1 = \sum_{i=1}^k\eps_i\Delta_i$, so
\begin{align*}
\sum_{i=1}^k P_k &= -\sum_{i=1}^k\eps_i\Delta_i 
                    -\prism\partial\sum_{i=1}^k\eps_i\Delta_i
                        + \phi_\sharp\sum_{i=1}^k\eps_i\Delta_i \\ 
                 &= -D_1 - \prism\partial D_1 +\phi_\sharp D_1 \\
                 &= -D_1 - \prism\partial D_1 - D_2 \\
                 &= -P_0-S_1-S_2,
\end{align*}
and it follows that $S_1+S_2+\sum_{i=0}^k P_i = 0$.
\end{remark}

\subsection{Connect sums of several spheres}

Our next technical lemma takes several spheres $S_1,\ldots,S_k$, and
an additional sphere $S_0$, and constructs a sphere \sph\ meeting each
of $S_1,\ldots,S_k$ in a single $n$-simplex. The case
$n=1$, $k=3$ is illustrated
in Figure~\ref{connectingsphere.fig}. This
lemma is an adaption to higher dimensions of a construction used
by Flapan\etal~\cite{flapan-mellor-naimi2008} in 
the case $n=1$. In that case the additional sphere
$S_0$ is not needed, as it is only necessary to choose edges joining
$S_i$ to $S_{i+1}$, and $S_k$ to $S_1$. This depends on the fact that
the cylinder $S^0\times I$ is disconnected, and our additional sphere $S_0$
is necessary for $n\geq 2$, when $S^{n-1}\times I$ is connected.

\begin{figure}
\begin{center}
\psfrag{S0}{$S_0$}
\psfrag{S1}{$S_1$}
\psfrag{S2}{$S_2$}
\psfrag{S3}{$S_3$}
\psfrag{S}{$\sph$}
\psfrag{d1}{$\delta_1$}
\psfrag{d2}{$\delta_2$}
\psfrag{d3}{$\delta_3$}
\includegraphics[scale=0.6]{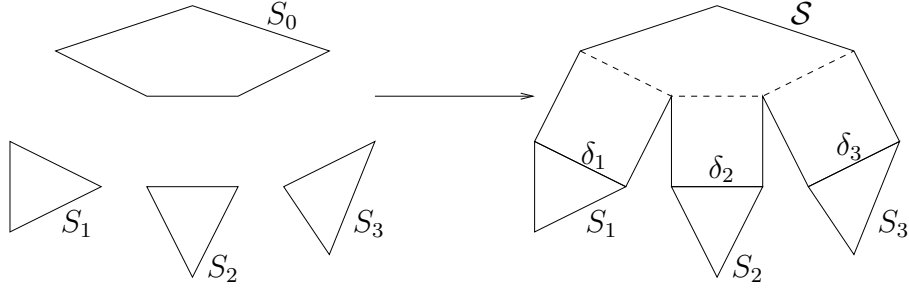}
\caption{Illustrating Lemma~\ref{connectingsphere.lem} in the
case $n=1$, $k=3$. The sphere
$S_0$ is used to construct a sphere \sph\ meeting $S_i$ in a single
$n$-simplex $\delta_i$ for $i=1,2,3$.}
\label{connectingsphere.fig}
\end{center}
\end{figure}

\begin{lemma}
\label{connectingsphere.lem}
Let $S_0,S_1,\ldots,S_k$ be disjoint subcomplexes of \complete{N}\
each homeomorphic to $S^n$, and suppose that $S_0$ has at least $k$
$n$-simplices. Then there is a subcomplex $\sph$ of 
\complete{N}\ such that
\begin{enumerate}
\item
the vertices of $\sph$ all lie on $S_0\cup\cdots\cup S_k$;
\item
$\sph$ is homeomorphic to $S^n$;
\item
for $i=1,\ldots,k$ there is an $n$-simplex $\delta_i$ of $S_i$ such that
$\sph\cap S_i=\delta_i$.
\end{enumerate}
Moreover, if each sphere $S_i$ is oriented, then $\sph$ may be chosen and
oriented such that $\delta_i$ receives opposite orientations from 
\sph\ and from $S_i$. 
\end{lemma}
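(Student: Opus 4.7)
The plan is to build $\sph$ by modifying $S_0$: for each $i$ I delete an $n$-simplex $\sigma_i$ of $S_0$ and graft in a disc that reaches the $n$-simplex $\delta_i$ of $S_i$. I would choose $k$ distinct $n$-simplices $\sigma_1,\ldots,\sigma_k$ of $S_0$ (possible by hypothesis), an $n$-simplex $\delta_i$ on each $S_i$, and a simplicial isomorphism $\phi_i\colon\sigma_i\to\delta_i$; when the $S_i$ are oriented I insist that each $\phi_i$ be orientation reversing with respect to the orientations induced from $S_0$ and $S_i$. Lemma~\ref{crossed.lem}, applied to the disjoint pair $(S_0,\sigma_i)$ and $(S_i,\delta_i)$ with isomorphism $\phi_i$, produces a cylinder $\cyl_i\subseteq\complete{N}$ homeomorphic to $\partial\sigma_i\times I$, with vertices on $\sigma_i\cup\delta_i$, meeting $S_0$ in $\partial\sigma_i$ and $S_i$ in $\partial\delta_i$. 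Writing $T_0$ for the subcomplex of $S_0$ obtained by removing the $n$-simplices $\sigma_1,\ldots,\sigma_k$ but retaining all their proper faces, I set
\[
\sph \;=\; T_0 \;\cup\;\bigcup_{i=1}^k \cyl_i\;\cup\;\bigcup_{i=1}^k \delta_i.
\]

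Conditions (1) and (3) should follow quickly. The vertices of each piece of $\sph$ lie on $S_0\cup S_1\cup\cdots\cup S_k$ by Lemma~\ref{crossed.lem}. For (3), any $n$-simplex of $\sph$ with all vertices on $S_j$ must be $\delta_j$, since the mixed $n$-simplices of a cylinder $\cyl_l$ have some vertices on $\sigma_l\subseteq S_0$ and some on $\delta_l\subseteq S_l$, and so cannot lie entirely on the single sphere $S_j$; every lower-dimensional simplex of $\sph$ on $S_j$ is then a face of $\delta_j$.

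The heart of the proof, and the step I expect to be the main obstacle, is condition (2) that $\sph\cong S^n$. I would check it in two stages. First, $\sph$ is a closed $n$-manifold, verified by counting the $n$-simplices of $\sph$ adjacent to each $(n-1)$-face. The only delicate case is an $(n-1)$-face $\tau$ shared by two of the deleted $n$-simplices $\sigma_i$ and $\sigma_j$: in $S_0$ the two $n$-simplices containing $\tau$ are exactly $\sigma_i$ and $\sigma_j$ (as $S_0$ is a manifold), both absent from $T_0$; but $\cyl_i$ and $\cyl_j$ each contribute precisely one $n$-simplex incident to $\tau$, restoring the count to two. The remaining cases (faces on just one $\partial\sigma_i$, interior to a cylinder, or on some $\partial\delta_i$) are straightforward. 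Second, the topological type: $D_i:=\cyl_i\cup\delta_i$ is the cylinder $\partial\sigma_i\times I$ with the disc $\delta_i$ capping one end, hence homeomorphic to $D^n$ with boundary $\partial\sigma_i$, and a quick vertex-set check shows that distinct $D_i$ meet only inside $S_0$, along pieces of $\partial\sigma_i\cap\partial\sigma_j$. So $\sph$ arises from $S_0$ by iteratively removing the open cell $\sigma_i$ and gluing in the disc $D_i$ along the same boundary $\partial\sigma_i$, a move which preserves the homeomorphism type; hence $\sph\cong S_0\cong S^n$.

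For the orientation statement I would use the chain formalism of Corollary~\ref{embeddedspheres.cor}. Applied to each pair $(S_0,\sigma_i),(S_i,\delta_i)$ with $\phi_i$ orientation reversing, the corollary produces a sphere $P_1^{(i)}$ supported on $\sigma_i$, the cylinder simplices of $\cyl_i$, and $\delta_i$, with $\sigma_i$ carrying the opposite coefficient to its coefficient in $S_0$ and $\delta_i$ carrying the opposite coefficient to its coefficient in $S_i$. The integral chain $S_0+\sum_{i=1}^k P_1^{(i)}$ then has every $\sigma_i$ cancelling out and every $\delta_i$ appearing with the sign opposite to that in $S_i$; taking this chain as the orientation of $\sph$ delivers the required property.
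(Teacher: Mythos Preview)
Your construction is correct and coincides with the paper's: the paper obtains, via Corollary~\ref{embeddedspheres.cor}, an auxiliary sphere $Q_i=\sigma_i\cup\cyl_i\cup\delta_i$ for each $i$ and then forms $\sph$ by the iterated connect sum $T_i=T_{i-1}\#_{\sigma_i}Q_i$, which is exactly your operation of deleting the open $\sigma_i$ and attaching $D_i=\cyl_i\cup\delta_i$. Your orientation argument via the chains $P_1^{(i)}$ is likewise the same mechanism the paper invokes; the only cosmetic difference is that the paper packages the inductive connect-sum from the outset (making the $S^n$ verification a one-liner), whereas you define $\sph$ in one shot and then recover the inductive description --- your separate $(n-1)$-face count is redundant, since by itself it only gives a pseudomanifold, and the iterative disc-replacement already establishes $\sph\cong S^n$.
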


\begin{proof}
We will assume that the $S_i$ are oriented. 
Choose an $n$-simplex $\delta_i$ belonging to $S_i$ for each $i\geq 1$,
distinct $n$-simplices $\delta'_i$ belonging to $S_0$ for
$i=1,\ldots,k$, and orientation reversing simplicial isomorphisms
$\phi_i:\delta_i\to\delta_i'$.  
Applying Corollary~\ref{embeddedspheres.cor} to the
pairs $(S_i,\delta_i)$ and $(S_0,\delta'_i)$ we obtain a sphere $Q_i$
with all its vertices on $S_i\cup S_0$, and such that $Q_i$ meets $S_i$ in
$\delta_i$ and $S_0$ in $\delta_i'$. 
Note that this implies $Q_i\cap Q_j=\delta_i'\cap\delta_j'$. 

We set $T_0=S_0$, and for $i=1,\ldots,k$ we inductively define $T_i$ to
be the complex obtained from $T_{i-1}$ and $Q_i$ by omitting the interior
of the disc $\delta_i'$. Then at each stage $T_i$ is 
an $n$-sphere, because it is the result of
gluing two discs along their common boundary $\partial\delta_i'$, and
setting $\sph=T_k$ we obtain the desired subcomplex.
\end{proof}

To conclude this section we establish a bound on the number of vertices
required to construct an $n$-sphere with a specified number of $n$-simplices.

\begin{lemma}
\label{n-sphere.lem}
Given $\ell\in\naturalnumber$ there is a triangulation of $S^n$ with
$n+\ell+1$ vertices and $\ell n+2$ $n$-simplices.
\end{lemma}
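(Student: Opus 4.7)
The plan is to argue by induction on $\ell$, starting from the boundary of the $(n+1)$-simplex and repeatedly applying a stellar subdivision that refines a single top-dimensional face.

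For the base case $\ell=1$, take the boundary $\partial\Delta^{n+1}$ of an $(n+1)$-simplex, which is a triangulation of $S^n$ with exactly $n+2$ vertices and $n+2$ top-dimensional faces; this matches $n+\ell+1$ and $\ell n+2$ when $\ell=1$.

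For the inductive step, suppose $T$ is a triangulation of $S^n$ with $n+\ell+1$ vertices and $\ell n+2$ $n$-simplices. Pick any $n$-simplex $\sigma=[v_0,\ldots,v_n]$ of $T$, introduce a new vertex $w$, and form $T'$ by removing $\sigma$ and inserting the $n+1$ simplices
\[
[v_0,\ldots,\hat{v}_i,\ldots,v_n,w], \qquad i=0,1,\ldots,n,
\]
together with all their faces. Combinatorially this is the stellar subdivision of $\sigma$; topologically it replaces the closed disc $\sigma$ by the cone on $\partial\sigma$ with apex $w$, which is again a closed disc with the same boundary, so $T'$ still triangulates $S^n$. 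The vertex count rises by one to $n+(\ell+1)+1$, and the number of $n$-simplices changes by $-1+(n+1)=+n$, giving $(\ell+1)n+2$ top-dimensional faces, as required.

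There is essentially no obstacle beyond bookkeeping: one must check only that the stellar subdivision yields a bona fide simplicial complex (which is immediate, since $w$ is a new label and the inserted simplices together with their faces are all distinct and pairwise intersect in common subsimplices) and that it preserves the homeomorphism type of $S^n$ (which follows from the cone description above). Iterating the step $\ell-1$ times from the base case then produces a triangulation with the advertised parameters for every $\ell\geq 1$.
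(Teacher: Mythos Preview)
Your proof is correct, and it takes a genuinely different route from the paper's. The paper builds a triangulated $(n+1)$-disc $D$ by gluing $\ell$ copies of $\Delta^{n+1}$ in a chain along $n$-faces, then takes $S=\partial D$; the bulk of the work is then the face and vertex count and the verification that every vertex of $D$ actually lies on $\partial D$. Your argument instead works intrinsically on $S^n$, starting from $\partial\Delta^{n+1}$ and performing $\ell-1$ stellar subdivisions of facets. The two constructions in fact produce the same family of triangulations (stacked $n$-spheres): performing a stellar subdivision on a facet $\sigma$ of $\partial D$ is exactly the boundary effect of gluing a new $(n+1)$-simplex to $D$ along $\sigma$. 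Your approach is slightly cleaner for this lemma, since the vertex and simplex counts are immediate and there is no need to check that no vertices get stranded in the interior; the paper's approach has the mild advantage that the resulting sphere visibly bounds a triangulated disc, though this is not used elsewhere.
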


\begin{proof}
We construct the triangulation from a suitable triangulation of $D^{n+1}$
with $\ell$ $(n+1)$-simplices. For $i=1,\ldots,\ell$ let $\Delta_i$ be
an $(n+1)$-simplex, and choose distinct $n$-simplices $\delta_i$,
$\sigma_i$ belonging to $\Delta_i$. Choose a  
simplicial isomorphism $\phi_i:\delta_i\to\sigma_{i+1}$ for each
$i=1,\ldots,\ell-1$, and 
let $D$ be the $(n+1)$-disc that results from gluing the $\Delta_i$
according to the $\phi_i$. We claim that $S=\partial D$ is the
required triangulated $n$-sphere.

The union $\Delta_1\cup\cdots\cup\Delta_\ell$ has a total of
$\ell(n+2)$ $n$-simplices, of which $2(\ell-1)$ are identified in
pairs to form $D$. The $n$-simplices involved in the identifications
lie in the interior of $D$, and the rest on the boundary, so $S$ has
$\ell(n+2)-2(\ell-1)=\ell n+2$ $n$-simplices, as claimed. 
Similarly, each gluing identifies $2(n+1)$ vertices in pairs, leaving
a total of $\ell(n+2)-(n+1)(\ell-1)=\ell+n+1$; alternately, we may
carry the gluings out sequentially, and we see that we start with 
$n+2$ vertices, and each gluing adds just one, for a total of
$(n+2)+(\ell-1)=n+\ell+1$.

To complete the proof we show that the vertices of $D$ all lie on $S$.
For $n=1$ a circle with $\ell+2$ edges necessarily has $\ell+2$
vertices, by Euler
characteristic; while for $n\geq 2$ each vertex of $\Delta_i$
belongs to at least three $n$-simplices, and so to at least one
$n$-simplex belonging to $\partial D$ after the identifications.
\end{proof}

\begin{corollary}
\label{n-sphere.cor}
If $k\in\naturalnumber$ and $N\geq n+\lceil k/n\rceil+1$ then
\complete{N}\ contains a subcomplex $S\cong S^n$ with at least $k+2$
$n$-simplices.
\end{corollary}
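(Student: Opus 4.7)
The plan is to deduce this directly from Lemma~\ref{n-sphere.lem} by choosing $\ell$ as small as possible so that the resulting sphere has at least $k+2$ $n$-simplices. Concretely, I would set $\ell=\lceil k/n\rceil$; then by Lemma~\ref{n-sphere.lem} there is a triangulation $S$ of $S^n$ with exactly $n+\ell+1 = n+\lceil k/n\rceil + 1$ vertices and exactly $\ell n + 2 \geq k+2$ $n$-simplices, the inequality following from $\lceil k/n\rceil \geq k/n$.

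Since $S$ is a simplicial complex on $n+\lceil k/n\rceil+1 \leq N$ vertices, and \complete{N}\ is the $n$-skeleton of an $(N-1)$-simplex, we may realize $S$ as a subcomplex of \complete{N}\ by choosing an injection of its vertex set into the vertex set of \complete{N}: every simplex of $S$ has dimension at most $n$, and every such simplex on a subset of the vertices of \complete{N}\ is automatically a face of the ambient simplex. This gives the desired subcomplex.

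There is no real obstacle here — the corollary is essentially a direct packaging of Lemma~\ref{n-sphere.lem}. The only point requiring any care is the arithmetic showing that $\lceil k/n\rceil\cdot n + 2 \geq k+2$, and the (obvious) observation that an abstract $n$-dimensional simplicial complex on $m$ vertices embeds as a subcomplex of \complete{m}, hence of \complete{N}\ whenever $N\geq m$.
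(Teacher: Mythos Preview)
Your proof is correct and follows exactly the same approach as the paper: set $\ell=\lceil k/n\rceil$, invoke Lemma~\ref{n-sphere.lem}, and use $\ell n\geq k$ to get at least $k+2$ $n$-simplices on at most $N$ vertices. The paper's proof is terser but substantively identical.
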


\begin{proof}
Set $\ell=\lceil k/n\rceil$. Then $\ell\in\naturalnumber$ and
$\ell\geq k/n$, so the construction of Lemma~\ref{n-sphere.lem} yields
an $n$-sphere $S$ in $\complete{N}$ with at least $k+2$ $n$-simplices.
\end{proof}

\section{Many-component links}
\label{rlinks.sec}

We now prove Theorems~\ref{chains.th} and~\ref{keyring.th}, thereby
showing that embeddings of sufficiently large complete complexes
necessarily contain non-split links with many components.

\subsection{Necklaces and chains}

In this section we establish Theorem~\ref{chains.th}. The key step is
the following lemma, which plays the role of Lemma~1 in
Flapan\etal~\cite{flapan-pommersheim-foisy-naimi2001}.

\begin{lemma}[The four-to-three lemma for mod two linking number]
\label{4to3mod2.lem}
Let $Y_1\cup X_1\cup X_2\cup Y_2$ be a $4$-component link contained in 
some embedding of \complete{N}\ in \ambient, satisfying 
\[
\linktwo{X_1}{Y_1} = \linktwo{X_2}{Y_2} = 1.
\]
Then there is an $n$-sphere $X$ in \complete{N}, all of whose
vertices lie on $X_1\cup X_2$, such that
\[
\linktwo{Y_1}{X} = \linktwo{X}{Y_2} = 1.
\]
\end{lemma}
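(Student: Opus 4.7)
The plan is to mimic the $n=1$ sketch in Section~\ref{tactics.sec}, with Corollary~\ref{embeddedspheres.cor} playing the role of the two-bridge construction. First I would select a single $n$-simplex $\Delta$ of $X_1$ and a single $n$-simplex $\sigma$ of $X_2$, together with any orientation-reversing simplicial isomorphism $\phi\colon\Delta\to\sigma$. Applying Corollary~\ref{embeddedspheres.cor} to the pairs $(X_1,\Delta)$ and $(X_2,\sigma)$ then produces two $n$-sphere subcomplexes $P_0,P_1$ of \complete{N}\ with all vertices on $X_1\cup X_2$, satisfying the chain identity $X_1+X_2+P_0+P_1=0$. By Remark~\ref{disjoint.rem} each of $P_0$ and $P_1$ is disjoint from $Y_1$ and $Y_2$, so writing $v(T)=(\linktwo{T}{Y_1},\linktwo{T}{Y_2})\in(\integer/2)^2$, additivity of $\linktwo{\cdot}{Y_j}$ on chains yields $v(X_1)+v(X_2)+v(P_0)+v(P_1)=(0,0)$.

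Next I would enlarge the list of candidate spheres to include the pairwise mod two sums $X_i+P_j$. The chain relation implies $X_1+P_0\equiv X_2+P_1$ and $X_1+P_1\equiv X_2+P_0$ mod two, giving six distinct candidates in total. I claim each is realized by a subcomplex homeomorphic to $S^n$: for instance $X_1+P_0$, regarded as the set of $n$-simplices belonging to exactly one of $X_1$ and $P_0$, consists of $\Delta$, the disc $\overline{X_2\setminus\sigma}$, and the cylinder $\cyl$ of Lemma~\ref{crossed.lem}, glued along their common $(n-1)$-sphere boundaries to form $D^n\cup(S^{n-1}\times I)\cup D^n\cong S^n$. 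All six candidates have vertices on $X_1\cup X_2$, as required.

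The argument concludes with a short case analysis in terms of $\alpha=\linktwo{X_1}{Y_2}$ and $\beta=\linktwo{X_2}{Y_1}$, so that $v(X_1)=(1,\alpha)$ and $v(X_2)=(\beta,1)$. If $\alpha=1$ take $X=X_1$; if $\beta=1$ take $X=X_2$. Otherwise $\alpha=\beta=0$ and the identity above forces $v(P_0)+v(P_1)=(1,1)$; if $v(P_i)=(1,1)$ for some $i$ take $X=P_i$. In the remaining subcase both $v(P_0)$ and $v(P_1)$ avoid $(1,1)$, and we claim one of $X_1+P_0, X_1+P_1$ then has vector $(1,1)$: for if both failed, then $v(P_0),v(P_1)\in\{(0,0),(1,0)\}$, contradicting $v(P_0)+v(P_1)=(1,1)$.

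The one technical point I expect to need care is verifying that the mod two sum $X_i+P_j$ is realized by an embedded subcomplex homeomorphic to $S^n$, and not merely as a homological cycle. This rests on the observation that every $n$-simplex of $\cyl$ has at least one vertex from $\Delta$ and at least one from $\sigma$, and so cannot coincide with any $n$-simplex of $X_1$ or $X_2$. The symmetric difference on $n$-simplices is then exactly the union $\Delta\cup\overline{X_2\setminus\sigma}\cup\cyl$ (or its counterpart in the other cases), and the resulting subcomplex is a sphere by the explicit disc-cylinder-disc gluing above.
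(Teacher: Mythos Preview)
Your proposal is correct and follows essentially the same approach as the paper's proof: both apply Corollary~\ref{embeddedspheres.cor} to a single $n$-simplex in each of $X_1,X_2$ to obtain $P_0,P_1$ with $X_1+X_2+P_0+P_1=0$, and then finish by a short mod-two case analysis, using one of $P_0,P_1$, or the connect sum $X_1+P_1$ (you also allow $X_1+P_0$, which the paper handles via a WLOG). Your vector bookkeeping $v(T)\in(\integer/2)^2$ is a cosmetic repackaging of the paper's coordinate-wise argument, and your explicit verification that $X_i+P_j$ is realised by an embedded $S^n$ (via the disc--cylinder--disc decomposition) is a welcome point of rigour that the paper leaves implicit.
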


\begin{proof}
If $\linktwo{X_1}{Y_2}=1$ then we may simply let $X=X_1$, and if
$\linktwo{X_2}{Y_1}=1$ then we may simply let $X=X_2$. So suppose that
\[
\linktwo{X_1}{Y_2}=\linktwo{X_2}{Y_1}=0.
\]
Choose $n$-simplices $\delta_1$, $\delta_2$ belonging to $X_1$, $X_2$ 
respectively, and apply Corollary~\ref{embeddedspheres.cor} to 
the pairs $(X_1,\delta_1)$, $(X_2,\delta_2)$ to obtain spheres 
$P_0$, $P_1$ satisfying
\[
X_1 + X_2 + P_0 +P_1 = 0.
\]
In the homology groups $H_n(\ambient-Y_i;\Z{2})$ we have
\[
[X_1] + [X_2] + [P_0] +[P_1] = 0,
\]
and since $[X_1] + [X_2]=1$ in each group we have also
$[P_0] +[P_1] =  1$
in each group. Hence, for each $i$, precisely one of $[P_0]$, $[P_1]$
must equal 1 in $H_n(\ambient-Y_i;\Z{2})$. 

If $[P_1]$ takes the same value in both groups then we are done
by setting $X=P_0$ if $[P_1]=0$ in both groups, and $X=P_1$ if $[P_1]=1$.
Otherwise, without loss of generality suppose that $[P_1]$ is zero in
$H_n(\ambient-Y_1;\Z{2})$ and nonzero in $H_n(\ambient-Y_2;\Z{2})$,
and let $X$ be the $n$-sphere obtained from $X_1$ and $P_1$ by
omitting the interior of the simplex $\delta_1$. Then
\[
[X] = [X_1]+[P_1] =\begin{cases}
                   [X_1] = 1 & \text{in $H_n(\ambient-Y_1;\Z{2})$}, \\
                   [P_1] = 1 & \text{in $H_n(\ambient-Y_2;\Z{2})$},
                   \end{cases}
\]
and the result follows.
\end{proof}

We now prove Theorem~\ref{chains.th}, using the above lemma.

\begin{proof}[Proof of Theorem~\ref{chains.th}]
The proof of part~\eqref{chain.item} is by induction on $r$, with the
base case $r=2$ given by Taniyama~\cite{taniyama2000}, and the
inductive step following from Lemma~\ref{4to3mod2.lem}. Given an
embedding of \complete{(2n+4)r}\ in \ambient, choose
disjoint copies of \complete{(2n+4)(r-1)}\ and \complete{2n+4}\ contained
in the embedding. By the inductive hypothesis the
\complete{(2n+4)(r-1)}\ contains an $r$-component link $L_1\cup
L_2\cup\cdots\cup L_r$ satisfying
equation~\eqref{sequentiallinking.eq} for $i=1,\ldots,r-1$,
and the \complete{2n+4}\ contains a two component link $J\cup K$
such that $\linktwo{J}{K}=1$. Applying Lemma~\ref{4to3mod2.lem}
to the (ordered) link $L_{r-1}\cup L_r\cup J\cup K$ we obtain an $n$-sphere
$X$ with all its vertices on $L_r\cup J$ such that 
\[
\linktwo{L_{r-1}}{X}=\linktwo{X}{K}=1.
\]
The link $L_1\cup\cdots\cup L_{r-1}\cup X\cup K$ is then the desired
$r$-component link.

To prove~\eqref{necklace.item} we apply
Lemma~\ref{4to3mod2.lem} to suitably chosen components
of an $(r+1)$-component link as given by part~\eqref{chain.item}. Given
an embedding of \complete{(2n+4)r}\ in \ambient, there is an
$(r+1)$-component link $L_1\cup L_2\cup\cdots\cup L_r\cup L_{r+1}$
satisfying equation~\eqref{sequentiallinking.eq} for $i=1,\ldots,r$. 
We apply Lemma~\ref{4to3mod2.lem} to the
(ordered) link $L_r\cup L_{r+1}\cup L_1\cup L_2$ to obtain an $n$-sphere
$X$, with all its vertices on $L_{r+1}\cup L_1$, and satisfying
\[
\linktwo{L_{r}}{X}=\linktwo{X}{L_2}=1.
\]
The link $L_2\cup\cdots\cup L_{r}\cup X$ is then the desired
$r$-component link.
\end{proof}

\subsection{Generalised keyrings}

We prove Theorem~\ref{keyring.th}, by extending
Lemma~1 of Flapan\etal~\cite{flapan-mellor-naimi2008}
to higher dimensions in the following form.

\begin{lemma}
\label{makekeyring.lem}
Let \complete{N}\ be embedded in \ambient\ such that it contains
a link 
\[
L\cup J_1\cup\cdots\cup J_{m^2}\cup X_1\cup\cdots\cup X_{m^2},
\]
where $L$ has at least $m^2$ $n$-simplices, and $\linktwo{J_i}{X_i}=1$
for all $i$. Then there is an $n$-sphere $Z$ in \complete{N}\ 
with all its vertices on $L\cup J_1\cup\cdots\cup J_{m^2}$, and an
index set $I$ with $|I|\geq\frac{m}2$, such that 
$\linktwo{Z}{X_j}=1$ for all $j\in I$.
\end{lemma}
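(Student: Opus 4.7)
The plan is to use Lemma~\ref{connectingsphere.lem} to construct a single ``spine'' sphere $\sph$ tying $L$ to each of the $J_i$, and then to form candidate $n$-spheres $Z_I$ by taking connect sums of $\sph$ with subsets of the $J_i$; an averaging argument over all subsets will isolate one that works.

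First I would apply Lemma~\ref{connectingsphere.lem} with $S_0=L$ and $S_i=J_i$ for $i=1,\ldots,m^2$; the hypothesis that $L$ has at least $m^2$ $n$-simplices is precisely what the lemma requires. This yields an oriented subcomplex $\sph\subseteq\complete{N}$, homeomorphic to $S^n$ and with all vertices on $L\cup J_1\cup\cdots\cup J_{m^2}$, meeting each $J_i$ in a single $n$-simplex $\delta_i$ that receives opposite orientations from $\sph$ and from $J_i$. For each $I\subseteq\{1,\ldots,m^2\}$ I would then form the integral chain $Z_I=\sph+\sum_{i\in I}J_i$. The opposite orientations cancel $\delta_i$ whenever $i\in I$, and since the $J_i$ are pairwise disjoint and each meets $\sph$ only in $\delta_i$, the support of $Z_I$ is obtained from $\sph$ by excising the open disks $\operatorname{int}(\delta_i)$ for $i\in I$ and capping each resulting hole with $J_i\setminus\operatorname{int}(\delta_i)$. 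This is an iterated connect sum of copies of $S^n$ with $S^n$, so $Z_I$ is again an embedded $n$-sphere, and all of its vertices lie on $L\cup J_1\cup\cdots\cup J_{m^2}$.

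Setting $b_j=\linktwo{\sph}{X_j}$ and $a_{ij}=\linktwo{J_i}{X_j}$, linearity in $H_n(\ambient\setminus X_j;\Z{2})$ yields
\[
\linktwo{Z_I}{X_j}\;\equiv\;b_j+\sum_{i\in I}a_{ij}\pmod 2.
\]
Since $a_{jj}=\linktwo{J_j}{X_j}=1$ by hypothesis, for each fixed $j$ the involution $I\mapsto I\triangle\{j\}$ flips the parity of $\sum_{i\in I}a_{ij}$, so exactly half of the $2^{m^2}$ subsets $I$ satisfy $\linktwo{Z_I}{X_j}=1$. Averaging over $I$, some choice $I^*$ must satisfy this condition for at least $m^2/2\geq m/2$ values of $j$; taking $Z=Z_{I^*}$ and letting $I$ be the set of such indices completes the argument. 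The only delicate point is verifying that $Z_I$ is genuinely an embedded $n$-sphere and not merely a singular $n$-cycle, but this follows directly from the disjointness of the $J_i$ together with the ``meet in exactly $\delta_i$'' and orientation properties supplied by Lemma~\ref{connectingsphere.lem}; I do not anticipate any other obstacle.
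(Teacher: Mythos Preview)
Your proof is correct, and the geometric setup---applying Lemma~\ref{connectingsphere.lem} to get the spine sphere $\sph$, then forming connect sums $Z_I=\sph+\sum_{i\in I}J_i$---is exactly what the paper does. The difference lies in the combinatorial step that selects the subset. The paper follows Flapan, Mellor and Naimi: it first disposes of the case where $\sph$ itself links at least $m/2$ of the $X_j$, and otherwise invokes their matrix argument (using the $1$'s on the diagonal of $M_{ij}=\linktwo{J_i}{X_j}$) to find row indices $i_1,\ldots,i_k$ whose row sum $V$ has at least $m$ ones, and then argues that $\linktwo{\sph}{X_j}$ can spoil fewer than $m/2$ of these. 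Your averaging argument bypasses both the case split and the cited row-sum lemma: since $a_{jj}=1$ makes each coordinate balanced over the $2^{m^2}$ subsets, the expected number of $j$ with $\linktwo{Z_I}{X_j}=1$ is $m^2/2$, so some $I^*$ achieves this. This is shorter, self-contained, and in fact yields the stronger bound $m^2/2$ rather than $m/2$, though the lemma as stated only requires the latter. One minor point: you use $I$ both for the subset defining $Z_I$ and for the index set in the conclusion; renaming one of them would avoid ambiguity.
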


\begin{proof}
The argument is that of 
Flapan\etal~\cite{flapan-mellor-naimi2008}, with the addition of the
component $L$ needed to create the analogue of their cycle $C$
connecting the $J_i$. 

Since $L$ has at least $m^2$ simplices we may apply
Lemma~\ref{connectingsphere.lem} to the (ordered) link
$L\cup J_1\cup\cdots\cup J_{m^2}$, obtaining an $n$-sphere
\sph\ with all its vertices on $L\cup J_1\cup\cdots\cup J_{m^2}$
and meeting each sphere $J_i$ in an $n$-simplex $\delta_i$.  
If at least $\frac{m}2$ of the mod two linking
numbers $\linktwo{\sph}{X_i}$ are nonzero then we are done by setting
$Z=\sph$, so we assume in what follows that fewer than $\frac{m}2$ of
these mod two linking numbers are nonzero.

Following Flapan\etal\ we define $M$ to be the $m^2\times m^2$ matrix over
$\integer/2\integer$ with $ij$-entry $M_{ij}=\linktwo{J_i}{X_j}$. Let
$r_i$ be the $i$th row of $M$. Then $M_{ii}=1$ for all $i$, and
Flapan\etal\ use this to show that there are indices $i_1,\ldots,i_k$ such
that
\[
V=r_{i_1}+\cdots+r_{i_k}
\]
has at least $m$ entries that are equal to $1$. Let $Z$ be
the $n$-sphere obtained from $\sph$ and $J_{i_1},\ldots,J_{i_k}$ by omitting
the interiors of the simplices $\delta_{i_1},\ldots,\delta_{i_k}$.
We claim that $Z$ is the required $n$-sphere.

Indeed, for $j=1,\ldots,m^2$ we have
\begin{equation}
\label{rowsumlinking.eq}
\linktwo{Z}{X_j} = \linktwo{\sph}{X_j} + \sum_{\ell=1}^k \linktwo{J_{i_\ell}}{X_j}
                 = \linktwo{\sph}{X_j} + V_j,
\end{equation}
where $V_j = \sum_{\ell=1}^k \linktwo{J_{i_\ell}}{X_j}$ is the $j$th
entry of $V$. 
By construction at least $m$ of the $V_j$ are nonzero,
and by assumption fewer than $\frac{m}2$ of the \linktwo{\sph}{X_j}\ are
nonzero. Hence there are at least $m-\frac{m}{2}=\frac{m}{2}$ indices $j$ for
which $V_j=1$ while $\linktwo{\sph}{X_j}=0$. Consequently, the set
$I=\{1\leq j\leq m^2: \linktwo{\sph}{X_j} \neq V_j\}$ has at least $\frac{m}2$
elements. But $\linktwo{Z}{X_j}=1$ if and only if $j\in I$,
by~\eqref{rowsumlinking.eq}, so we are done.
\end{proof}

We now obtain Theorem~\ref{keyring.th} as a corollary to
Lemma~\ref{makekeyring.lem} and Corollary~\ref{n-sphere.cor}.

\begin{proof}[Proof of Theorem~\ref{keyring.th}]
Recall that 
\[
\key(r)=4r^2(2n+4)+n+\left\lceil\frac{4r^2-2}{n}\right\rceil+1,
\]
and for ease of notation 
let $\ell=\lceil{(4r^2-2)}/{n}\rceil$.
Given an embedding of \complete{\key(r)}\ in \ambient, choose $4r^2$
disjoint copies of \complete{2n+4}\ contained in the embedding,
together with a copy of
\complete{n+\ell+1}. By
Taniyama~\cite{taniyama2000} the $i$th copy of \complete{2n+4}\
contains a 2-component link $J_i\cup X_i$ such that $\linktwo{J_i}{X_i}=1$,
and by Corollary~\ref{n-sphere.cor} the copy of $\complete{n+\ell+1}$
contains an $n$-sphere $L$ with at least $4r^2$ $n$-simplices. 
The result now follows by applying
Lemma~\ref{makekeyring.lem} with $m=2r$ to the link
\[
L\cup J_1\cup\cdots\cup J_{4r^2}\cup X_1\cup\cdots\cup X_{4r^2}.
\qedhere
\]
\end{proof}

\section{Linking number in 2-component links}
\label{2-component.sec}

We now prove Theorems~\ref{linkingnumber.th} and~\ref{modp.th},
concerning the linking number in a 2-component link. To prove each result
we start with a suitable generalised keyring, and combine some of the
``keys'' to obtain the second component of the desired link.

\subsection{Bounding the absolute value of the linking number from below.}

\begin{proof}[Proof of Theorem~\ref{linkingnumber.th}]
We use a technique of Flapan\etal~\cite{flapan-mellor-naimi2008}
from the proof of their Lemma~2.  For simplicity
of notation let $\ell = \lceil(2\lambda-1)/n\rceil$, and choose disjoint
copies of \complete{\key({2\lambda-1})}\ and
\complete{n+\ell+1}\ contained in \complete{N}. Given an embedding of
\complete{N}\ in \ambient, the copy of
\complete{\key({2\lambda-1})}\ contains a generalised keyring 
$R\cup L_1\cup\cdots\cup L_{2\lambda-1}$ with
$2\lambda-1$ keys, by Theorem~\ref{keyring.th}, while by
Corollary~\ref{n-sphere.cor} the copy of \complete{n+\ell+1}\ contains an
$n$-sphere $S$ with at least $2\lambda+1$ $n$-simplices. 

Orient $S$ arbitrarily, and orient the $L_i$ such that
$\link{R}{L_i}>0$ for each $i$. Applying
Lemma~\ref{connectingsphere.lem} to the oriented link $\mathcal{L}=S\cup
L_1\cup\cdots\cup L_{2\lambda-1}$ we obtain an $n$-sphere \sph\ with
all its vertices on $\mathcal{L}$ and meeting each $L_i$ in a single
$n$-simplex $\delta_i$, which
receives opposite orientations from $\sph$ and from $L_i$. 
Set $\sph_0=\sph$, and for $i=1,\ldots,2\lambda-1$ let $\sph_i$ be
the complex obtained from $\sph_{i-1}$ and $L_i$ by omitting the
interior of the disc $\delta_i$. Then $\sph_i$ is an $n$-sphere, because
it is the result of gluing
two discs along their common boundary $\partial\delta_i$, and as
a chain we have
\begin{equation}
\label{sumofspheres.eq}
\sph_i = \sph_0 + \sum_{j=1}^i L_j
\end{equation}
for $i\geq 1$. 

We now consider the linking numbers of the $\sph_i$ with $R$,
by considering equation~\eqref{sumofspheres.eq} in the
group $H_n(\ambient-R;\integer)$. This gives
\[
\link{R}{\sph_i} = [\sph_i] = [\sph_0]+\sum_{j=1}^i [L_j]
                            = \link{R}{\sph_0}+\sum_{j=1}^i \link{R}{L_j}.
\]
As in the proof of Lemma~2 of Flapan\etal\ the sequence
$\bigl(\link{R}{\sph_i}\bigr)_{i=0}^{2\lambda-1}$ is strictly
increasing, because the linking numbers $\link{R}{L_i}$ are all
positive. This sequence must therefore take $2\lambda$ distinct
values, and the result now follows from the fact that there are only
$2\lambda-1$ integers $k$ such that $|k|<\lambda$.
\end{proof} 

\subsection{The linking number modulo a prime $p$}
\label{modp.sec}

To prove Theorem~\ref{modp.th} we will use the following lemma on
sums of subsequences of finite integer sequences, considered mod
$p$. Given an integer sequence $(\ell_1,\ldots,\ell_m)$ we will
say that $x\in\integer$ is a \emph{subsequence sum} of
$(\ell_1,\ldots,\ell_{m})$ if there is
a subset $A\subseteq\{1,\ldots,m\}$ such that
\[
\sum_{i\in A}\ell_i = x.
\]
We allow the possibility that $A$ is empty, which implies that
$0$ is always a subsequence sum. Then:

\begin{lemma}
\label{subsequencesums.lem}
Let $p\in\naturalnumber$ be prime, and let $(\ell_1,\ldots,\ell_{p-1})$
be a sequence of integers such that no $\ell_i$ is divisible by $p$.
For any $s\in\integer$ there is a subsequence sum $x$ of
$(\ell_1,\ldots,\ell_{p-1})$ such that $x\equiv s\bmod p$.
\end{lemma}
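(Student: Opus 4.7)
The plan is to induct on the length of the initial subsequence, tracking how the set of subsequence sums mod $p$ grows. For $0\leq k\leq p-1$, let $S_k\subseteq \Z{p}$ denote the set of residues represented by a subsequence sum of $(\ell_1,\ldots,\ell_k)$, so that $S_0=\{0\}$ and
\[
S_{k+1}=S_k\cup(S_k+\ell_{k+1})
\]
(since a subsequence of $(\ell_1,\ldots,\ell_{k+1})$ either omits $\ell_{k+1}$ or includes it). The goal is to show $S_{p-1}=\Z{p}$, which says precisely that every residue $s\bmod p$ arises as a subsequence sum.

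The key claim is that for each $k$, either $S_k=\Z{p}$ already, or $|S_{k+1}|>|S_k|$. To see this, suppose $|S_{k+1}|=|S_k|$; then $S_k+\ell_{k+1}\subseteq S_k$ and both sides have the same cardinality, so $S_k+\ell_{k+1}=S_k$, i.e.\ $S_k$ is closed under translation by $\ell_{k+1}$. Iterating, $S_k$ is closed under translation by every integer multiple of $\ell_{k+1}$. Since $p$ is prime and $\ell_{k+1}\not\equiv 0\bmod p$, the element $\ell_{k+1}$ generates the additive group $\Z{p}$, so these multiples exhaust $\Z{p}$. Because $0\in S_k$, this forces $S_k=\Z{p}$.

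Given the claim, starting from $|S_0|=1$ the cardinalities $|S_0|,|S_1|,\ldots,|S_{p-1}|$ either reach $p$ at some step (after which the set is all of $\Z{p}$ and stays there) or strictly increase at every step; in either case $|S_{p-1}|\geq p$, so $S_{p-1}=\Z{p}$, completing the proof. The only point where primality is essential is the passage from ``$S_k$ is closed under translation by $\ell_{k+1}$'' to ``$S_k$ is closed under translation by every residue''; this step fails for composite $p$, which is consistent with the lemma being false in that generality (for instance, $(2,2,2)$ mod $4$). I do not anticipate a genuine obstacle: the argument is the standard Cauchy--Davenport-style induction, and no dimensional considerations from the rest of the paper enter here.
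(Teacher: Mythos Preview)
Your proof is correct and follows essentially the same approach as the paper: both induct on the prefix length, tracking the set of achievable residues mod $p$ and showing it grows by at least one at each step unless already all of $\Z{p}$. The only cosmetic difference is that the paper establishes growth constructively (finding the least $k$ with $k\ell_{j+1}\notin\Sigma_j$ and exhibiting it as a new element of $\Sigma_{j+1}$), whereas you argue contrapositively via closure under translation; the underlying idea and use of primality are identical.
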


We note that the sequence length $p-1$ is best possible, because a
sequence of length $p-2$ that is constant mod $p$ realises 
exactly $p-1$ mod $p$ residue classes as subsequence sums.

\begin{proof}
For $j=1,\ldots,p-1$ let $\Sigma_j$ be the set of mod $p$ residue
classes that may be realised by a subsequence sum of
$(\ell_1,\ldots,\ell_{j})$. Then
$\Sigma_1=\bigl\{\overline0,\overline{\ell_1}\bigr\}$, and our goal is
to show that
$\Sigma_{p-1}=\bigl\{\overline0,\overline1,\ldots,\overline{p-1}\bigr\}$. We
will do this by showing that $|\Sigma_{j+1}|\geq|\Sigma_j|+1$ whenever
$\Sigma_{j}\neq\bigl\{\overline0,\overline1,\ldots,\overline{p-1}\bigr\}$. Since
$\Sigma_j\subseteq\Sigma_{j+1}$ it suffices to show that there is an
element of $\Sigma_{j+1}$ that is not an element of $\Sigma_j$.

Suppose then that $\Sigma_j\neq\bigl\{\overline0,\overline1,\ldots,\overline{p-1}\bigr\}$,
and consider multiples of $\ell_{j+1}$ mod $p$. Since 
$\ell_{j+1}\not\equiv0\bmod p$ we have
\[
\bigl\{\overline{k\ell_{j+1}}|0\leq k\leq p-1\bigr\}
= \bigl\{\overline0,\overline1,\ldots,\overline{p-1}\bigr\}
\supsetneq \Sigma_j\supseteq\bigl\{\overline0\bigr\},
\]
so there is some $1\leq k\leq p-1$ such that 
$\overline{k\ell_{j+1}}\notin\Sigma_j$. Consider the least such 
$k$. Then there is a (possibly empty, if $k=1$) subset $A$ of
$\{1,\ldots,j\}$ such that 
\[
\sum_{i\in A} \ell_i \equiv (k-1)\ell_{j+1}\bmod p,
\]
and setting $B=A\cup\{j+1\}$ we have
\[
\sum_{i\in B} \ell_i \equiv k\ell_{j+1}\bmod p.
\]
Hence $\overline{k\ell_{j+1}}$ belongs to $\Sigma_{j+1}$ but not
$\Sigma_j$, and we are done.
\end{proof}

\begin{proof}[Proof of Theorem~\ref{modp.th}]
The technique is similar to that used in the proof of
Theorem~\ref{linkingnumber.th}.  By Theorem~\ref{keyring.th} and
Corollary~\ref{n-sphere.cor}, $N$ is so large that every embedding of
\complete{N}\ in \ambient\ contains a generalised keyring $R\cup
L_1\cup\cdots\cup L_{2p-1}$ with $2p-1$ keys, and an additional
disjoint sphere $S$ with at least $2p-1$ $n$-simplices.  Orient the
link $S\cup L_1\cup\cdots\cup L_{2p-1}$ as in the proof of
Theorem~\ref{linkingnumber.th}, and let \sph\ be the $n$-sphere that
results from applying Lemma~\ref{connectingsphere.lem} to this link.

We now consider the linking numbers $\link{R}{\sph}$ and
$\link{R}{L_i}$ modulo $p$. If $\link{R}{L_i}\equiv 0$ for
some $i$ then we are done, so we may assume that all such
linking numbers are nonzero mod $p$. Then by Lemma~\ref{subsequencesums.lem}
there is a subset $A\subseteq\{1,\ldots,p-1\}$ such that
\[
\sum_{i\in A} [L_i] \equiv -[\sph] \bmod p,
\]
and a subset $B\subseteq\{p+1,\ldots,2p-1\}$ such that
\[
\sum_{i\in B} [L_i] \equiv -[L_p] \bmod p.
\]
Set $C=B\cup\{p\}$, to obtain a nonempty subset of
$\{p,\ldots,2p-1\}$ such that 
\[
\sum_{i\in C} [L_i] \equiv 0 \bmod p.
\]
We now consider the chains
\begin{align*}
S_1 &= \sph + \sum_{i\in A} L_i,    &
S_2 &= S_1 + \sum_{i\in C} L_i.
\end{align*}
In the homology group
$H_n(\ambient-R;\integer)$ we have
\[
[S_1]\equiv[S_2]\equiv 0\bmod p,
\]
and moreover $[S_1]\neq[S_2]$, because the linking numbers $[L_i]$
are all positive and $C$ is nonempty. It follows that at least
one of $[S_1]$ and $[S_2]$ is nonzero, and since both chains represent
$n$-spheres we are done.
\end{proof}

We note that the argument used above does require $p$ to be prime. For
$q$ composite, if $\link{R}{\sph}$ is coprime to $q$ and all linking numbers
$\link{R}{L_i}$ are equal to the same nontrivial divisor $d$ of $q$, then
no sphere formed from \sph\ and the $L_i$ as above will link $R$ with linking
number divisible by $q$.
We will therefore use a different strategy in Section~\ref{modq.sec}
to prove the corresponding
result when $q$ may be composite.

\section{Technical preliminaries II: Triangulations of an $M$-simplex}
\label{triangulations.sec}

We now establish some additional technical preliminaries needed to
prove Theorem~\ref{modq.th}. For this theorem we will need to work
with links containing identically triangulated discs $D^n$ with many
$n$-simplices, and to this end we will construct a triangulation of an
$M$-simplex into many $M$-simplices.

\subsection{The triangulations}

For $\ell\in\naturalnumber$ let $\Delta^M_\ell$ be the $M$-simplex 
\[
\Delta = 
 [\ell\vect{e}_1,\ell\vect{e}_2,\ldots,\ell\vect{e}_{M+1}]\subseteq\real^{M+1},
\]
where $\vect{e}_1,\vect{e}_2,\ldots,\vect{e}_{M+1}$ are the standard
basis vectors. Then:

\begin{lemma}
\label{triangulations.lem}
The family of planes
\begin{equation}
\label{dividingplanes.eq}
\left\{\left.\sum_{k=i}^j x_k \in\integer\right| 
              1\leq i\leq j\leq M\right\}
\end{equation}
subdivides $\Delta^M_\ell$ into $\ell^M$ $M$-simplices. The symmetry
group of this triangulation is the dihedral group $D_{M+1}$ of order
$2(M+1)$, with the action given by permutations of the basis vectors
$\vect{e}_i$ that preserve or reverse the cyclic ordering 
$\vect{e}_1,\vect{e}_2,\ldots,\vect{e}_{M+1}$. 
\end{lemma}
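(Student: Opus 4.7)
The plan is to switch to partial-sum coordinates $y_i := \sum_{k=1}^i x_k$ for $0\leq i\leq M+1$, so that $y_0=0$ and $y_{M+1}=\ell$. Under this linear change of variables, $\Delta^M_\ell$ is identified with the order polytope $\{(y_1,\ldots,y_M):0\leq y_1\leq\cdots\leq y_M\leq\ell\}$, and the hyperplane $\sum_{k=i}^j x_k=c$ becomes $y_j-y_{i-1}=c$. Because $y_0=0$ and $y_{M+1}=\ell$ are integers, the family~\eqref{dividingplanes.eq} then coincides with the symmetric family $\{y_a-y_b\in\integer:0\leq a<b\leq M+1\}$ restricted to the simplex; this enlarged description will drive both the count and the symmetry analysis.

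I would next parameterize the open cells by pairs $(\vectg{a},\tau)$, where $\vectg{a}=(a_1,\ldots,a_M)$ is a weakly increasing sequence in $\{0,1,\ldots,\ell-1\}$ recording the integer parts $a_i=\lfloor y_i\rfloor$, and $\tau\in S_M$ is the permutation satisfying $\{y_{\tau(1)}\}<\cdots<\{y_{\tau(M)}\}$. Consistency of the pair with $y_1\leq\cdots\leq y_M$ requires $\tau^{-1}(i)<\tau^{-1}(j)$ whenever $i<j$ and $a_i=a_j$. Writing $u_i:=y_i-a_i\in(0,1)$, the corresponding open cell is cut out by the chain
\[
0<u_{\tau(1)}<u_{\tau(2)}<\cdots<u_{\tau(M)}<1,
\]
which is visibly an open standard $M$-simplex congruent (by a unimodular affine map) to $\Delta^M_1$; closing up recovers each cell as a genuine $M$-simplex.

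For the count, I would exhibit a bijection between compatible pairs $(\vectg{a},\tau)$ and arbitrary functions $f:\{1,\ldots,M\}\to\{0,1,\ldots,\ell-1\}$, sending $(\vectg{a},\tau)$ to $f(i):=a_{\tau^{-1}(i)}$, with inverse the stable sort of $f$. This gives exactly $\ell^M$ open cells, and a Euclidean volume check ($\mathrm{vol}(\Delta^M_\ell)=\ell^M\cdot\mathrm{vol}(\Delta^M_1)$) provides independent confirmation.

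For the symmetry group, the cyclic shift $\vect{e}_i\mapsto \vect{e}_{i+1\bmod(M+1)}$ sends a consecutive sum $\sum_{k=i}^j x_k$ to a cyclically consecutive sum, which using $\sum_k x_k=\ell\in\integer$ is either another member of~\eqref{dividingplanes.eq} or equals $\ell$ minus such a member, so its integrality is preserved; the reflection $\vect{e}_i\mapsto \vect{e}_{M+2-i}$ sends consecutive sums directly to consecutive sums. Conversely, any $\pi\in S_{M+1}$ preserving the subdivision must send every proper consecutive interval of $\{1,\ldots,M+1\}$ to a consecutive cyclic interval or its complement, and an elementary combinatorial argument shows this forces $\pi\in D_{M+1}$. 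The main obstacle is the second paragraph --- rigorously establishing the combinatorial description of the cells and confirming that each is an $M$-simplex; the remaining steps are routine bookkeeping once that is in place.
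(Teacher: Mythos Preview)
Your proposal is correct and follows essentially the same route as the paper: the partial-sum change of variables $y_i=\sum_{k\leq i}x_k$ is precisely the linear map the paper uses to identify $\Delta^M_\ell$ with $\Sigma^M_\ell=\{0\leq y_1\leq\cdots\leq y_M\leq\ell\}$, your $(\vectg{a},\tau)$ parameterisation of cells is the paper's integer-translate-plus-permutation description of the simplices in the family~\eqref{dividingRM.eq}, and your cyclic-interval reasoning for the symmetry group is the paper's necklace argument. The only difference is cosmetic: the paper relies solely on the volume count (which you also give) and does not spell out the bijection with functions $f:\{1,\ldots,M\}\to\{0,\ldots,\ell-1\}$.
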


We will call an $M$-simplex triangulated as in Lemma~\ref{triangulations.lem}
a \emph{triangulated $M$-simplex of side-length $\ell$}, and denote it
by $\divided[M]{\ell}$. 

\begin{remark}
The triangulation \divided[2]{\ell}\ is simply the standard division
of an equilateral triangle of side-length $\ell$ into $\ell^2$ equilateral
triangles of side-length $1$. In this case all simplices of the triangulation
are isometric. However, for $M\geq 3$ the simplices of the triangulation
may no longer all be isometric. This may be seen in the case \divided[3]{2},
where four of the $3$-simplices are regular tetrahedra, and the remaining
four are obtained by cutting an octahedron along two of the three planes
of symmetry that pass through four vertices.
\end{remark}

\begin{remark}
\label{orientation.rem}
The $M+1$ cycle $(1\;2\;\ldots\;M+1)$ in $D_{M+1}$ reverses orientation 
of $\Delta_l^M$ if and only if $M$ is odd, and when $M$ is even the order
two elements of $D_{M+1}$ reverse orientation if and only if 
$M\equiv2\bmod 4$. So \divided[M]{\ell}\ has an orientation reversing
symmetry if and only if $M\not\equiv 0\bmod 4$.
\end{remark}

\begin{proof}
We proceed by subdividing the simplex
\[
\Sigma^M_\ell = \{\vect{x}\in\real^M|
          0\leq x_1\leq x_2\leq \cdots \leq x_M\leq \ell\} 
\]
into $\ell^M$ simplices,
and then pull this subdivision back to $\Delta^M_l$. The chief reason
for working with $\Delta_\ell^M$ rather than $\Sigma_\ell^M$ is that the
symmetries of the triangulation are more readily seen.

We first observe that for each permutation $\sigma\in S_M$, the set
\[
\delta_\sigma = \{\vect{x}\in\real^M|
  0\leq x_{\sigma(1)}\leq x_{\sigma(2)}\leq \cdots \leq x_{\sigma(M)}\leq 1\}
\]
is an $M$-simplex, and that the collection of such simplices gives
a subdivision of $I^M$ into $M!$ simplices. These simplices are
defined by the family of planes
\[
\{x_i=0\}\cup\{x_i=1\}\cup\{x_j-x_i=0\},
\]
and translating these according to $\integer^M\leq\real^M$ we see that
the family 
\begin{equation}
\label{dividingRM.eq}
\{x_i\in\integer|1\leq i\leq M\}
\cup\{x_j-x_i\in\integer|1\leq i< j\leq M\}
\end{equation}
gives a subdivision of all of $\real^M$ into isometric simplices. The
planes bounding $\Sigma_\ell^M$ belong to this family, and it follows
that the subdivision of $\real^M$ restricts to a subdivision of
$\Sigma_\ell^M$. This subdivision must have $\ell^M$ simplices, on
purely volumetric grounds.

We now pull this triangulation back to $\Delta_\ell^M$ via the linear
map that sends the vertex $\vect{e}_i$ of $\Delta_\ell^M$ to the
vertex $\vect{e}_i+\cdots+\vect{e}_M$ of $\Sigma_\ell^M$ for $i\leq M$,
and the vertex $\vect{e}_{M+1}$ to the vertex $\vect{0}$. Let 
$\{\phi_i\}$ be the dual basis to $\{\vect{e}_i\}$. 
Then $\phi_i$ pulls back to $\phi_1+\cdots+\phi_i$, and we see that 
the family~\eqref{dividingRM.eq} pulls back
to the family~\eqref{dividingplanes.eq}. This
linear map induces an affine homeomorphism between $\Sigma_\ell^M$ and
$\Delta_\ell^M$, and so these planes give us the desired triangulation. 

To see that the symmetry group is $D_{M+1}$, we observe that on
the plane $\sum x_i = \ell$ containing $\Delta_\ell^M$, the 
conditions
\[
\sum_{k=i}^j x_k \in\integer \qquad \text{and} \qquad
\sum_{k=1}^{i-1} x_k + \sum_{k=j+1}^{M+1}x_k\in\integer
\]
are equivalent. Thus, each family of planes defining the subdivision
may be viewed as a division of a necklace of $M+1$ beads into two
connected components, and conversely. Symmetries of the triangulation
therefore correspond to precisely those permutations of the beads
that preserve adjacency, giving us $D_{M+1}$. 
\end{proof}

\begin{construction}
For $M\geq n+1$ we define \comdiv{M}{\ell}\ to be the subcomplex of
\divided[M-1]{\ell}\ consisting of precisely those simplices lying
entirely within the $n$-skeleton 
$(\Delta_\ell^{M-1})^{(n)}\cong \complete{M}$. 
Each $n$-simplex of $(\Delta_\ell^{M-1})^{(n)}$ lies in an 
$n$-dimensional co-ordinate plane, and is isometric to
$\Delta_\ell^n$; intersecting the family of planes~\eqref{dividingplanes.eq}
with this subspace subdivides this simplex into a \divided{\ell}. 
Thus \comdiv{M}{\ell}\ is a space homeomorphic to \complete{M}, with each
$n$-simplex of \complete{M}\ mapping onto a copy of \divided{\ell}.
As such we will call it a \emph{triangulated complete $n$-complex on $M$
vertices of side-length~$\ell$}.
\end{construction}

\subsection{Counting the vertices}

The number of vertices in a \divided[k]{\ell}\ is equal to the
number of non-negative integer solutions to the equation
\[
x_1+x_2+\cdots+x_{k+1} = \ell,
\]
and the number of vertices in the interior of a \divided[k]{\ell}\
is the number of positive integer solutions to this equation.
These numbers are $\binom{k+\ell}{k}$ and 
$\binom{\ell-1}{k}=\binom{\ell-1}{\ell-k-1}$
respectively. Counting the vertices of a \divided[M]{\ell}\ according 
to the open simplex of $\Delta_\ell^M$ that they belong to we
find that it has
\begin{equation}
\label{dividedMlvertexcount.eq}
\sum_{k=0}^{M} \binom{M+1}{k+1}\binom{\ell-1}{\ell-k-1} 
= \binom{\ell+M}{M}
\end{equation}
vertices (the two sides are the co-efficient of $x^\ell$ in 
$(1+x)^{M+1}(1+x)^{\ell-1} = (1+x)^{\ell+M}$). 

Of particular interest is the number of vertices belonging to 
$\comdiv{2n+4}{\ell}$, as this complex is homeomorphic to
\complete{2n+4}, and may be used to construct links in which
each component has many $n$-simplices.
Setting $M=2n+3$ in equation~\eqref{dividedMlvertexcount.eq},
and truncating the sum at $k=n$, we therefore find that
$\comdiv{2n+4}{\ell}$ has a total of
\begin{equation}
\label{vertexcount.eq}
V(n,\ell)=\sum_{k=0}^{n} \binom{2n+4}{k+1}\binom{\ell-1}{k}
\end{equation}
vertices. 

For a more tractable bound, observe that the triangulated simplex
$\divided{\ell}$ has $\ell^n$ $n$-simplices, each with $n+1$ vertices,
and so has at most $(n+1)\ell^n$ vertices. The complex
$\comdiv{2n+4}{\ell}$ contains $\binom{2n+4}{n+1}$ such triangulated
simplices, and therefore
\[
V(n,\ell) \leq (n+1)\binom{2n+4}{n+1}\ell^n
\]
(this also follows from the inequalities 
$\binom{2n+4}{k+1}\leq\binom{2n+4}{n+1}$ and
$\binom{\ell-1}{k}\leq\ell^n$ for $k\leq n$). Stirling's formula 
$m!\sim\sqrt{2\pi m}(m/e)^m$ leads to the asymptotic formula
$\binom{2m}{m}\sim4^m/\sqrt{\pi m}$, and hence
\[
(n+1)\binom{2n+4}{n+1} = \frac{(n+1)(n+2)}{n+3}\binom{2(n+2)}{n+2}
                       \sim \sqrt{\frac{n}\pi}\, 4^{n+2}
                       = C\sqrt{n} 4^n.
\]
Consequently, asymptotically $V(n,\ell)$ grows no faster than
$C\sqrt{n} (4\ell)^n$.

\section{Linking number mod $q$}
\label{modq.sec}

The goal of this section is to prove Theorem~\ref{modq.th}, which we
recall states that given $q\in\naturalnumber$, embeddings of
sufficiently large complete $n$-complexes in \ambient\ contain
2-component links with linking number a nonzero multiple of $q$.
Before proving this theorem we need one more technical lemma:

\begin{lemma}
\label{lineardisc.lem}
Let $R$ be a positive integer. For $\ell$ sufficiently large 
$\divided{\ell}$ contains a triangulated disc $D$ with
$r\geq R$ $n$-simplices $\Delta_1,\ldots,\Delta_r$, which may
be labelled such that 
\[
D_{ij} = \bigcup_{k=i}^j \Delta_k
\]
is a disc for any $1\leq i\leq j\leq r$. The conclusion holds
for $\ell\geq R$, so the side length required grows at most
linearly with $R$. 
\end{lemma}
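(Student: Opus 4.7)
The plan is to build $D$ explicitly as a ``staircase'' of $n$-simplices along a lattice path, working in the coordinate system $\Sigma_\ell^n = \{\vect{x}\in\real^n : 0 \leq x_1 \leq \cdots \leq x_n \leq \ell\}$ provided by the proof of Lemma~\ref{triangulations.lem}. I define the path $p_0, p_1, \ldots, p_{n\ell}$ by $p_0 = \vect{0}$ and $p_{j+1} = p_j + \vect{e}_{n - (j \bmod n)}$; after each block of $n$ steps the path arrives at a diagonal point $(k,\ldots,k)$, reaching $(\ell,\ldots,\ell)$ after $\ell$ blocks. The $n$-simplices of $D$ will be the sliding windows $\Delta_k = [p_{k-1}, p_k, \ldots, p_{k+n-1}]$ for $k = 1, 2, \ldots, n(\ell-1)+1$, labelled in this order.

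First I would verify that each $\Delta_k$ is a simplex of the triangulation of Lemma~\ref{triangulations.lem}. Since the difference vectors cycle through $\vect{e}_n, \vect{e}_{n-1}, \ldots, \vect{e}_1$ with period $n$, any $n$ consecutive differences give a permutation of $\{\vect{e}_1, \ldots, \vect{e}_n\}$; so $\Delta_k$ is a translate of one of the simplices $\delta_\sigma$ used in that proof, and a direct check confirms that all its vertices satisfy $x_1 \leq \cdots \leq x_n$. Comparing vertex sets then gives $\Delta_k \cap \Delta_{k+1} = [p_k, \ldots, p_{k+n-1}]$, a common $(n-1)$-face.

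The disc property I would then establish by shelling. By induction on $k$ I claim that $D_{i,k}$ is an $n$-ball for every $1 \leq i \leq k \leq n(\ell-1)+1$. The base case $k = i$ is immediate, and for the inductive step one shows that the attaching face $\Delta_k \cap D_{i, k-1}$ equals the $(n-1)$-face $[p_{k-1}, \ldots, p_{k+n-2}]$ of $\Delta_k$ opposite $p_{k+n-1}$: for any $i \leq m \leq k-1$, the intersection $\Delta_m \cap \Delta_k$ sits inside this facet, and the facet lies on $\partial D_{i,k-1}$ because the only simplex $\Delta_m$ with $m \leq k-1$ containing all of $p_{k-1}, \ldots, p_{k+n-2}$ must satisfy $m - 1 \leq k - 1$ and $m + n - 1 \geq k + n - 2$, which forces $m = k-1$. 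Attaching an $n$-simplex to an $n$-ball along a single boundary $(n-1)$-face yields another $n$-ball.

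Finally, the chain has length $r = n(\ell - 1) + 1 \geq \ell$ for every $n \geq 1$, so taking $\ell \geq R$ gives $r \geq R$ as required. The main obstacle is the inductive shelling argument of the third paragraph; the combinatorial counting that identifies the attaching face handles it uniformly, and the remaining verifications are routine.
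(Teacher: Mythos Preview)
Your argument is correct, and it takes a genuinely different route from the paper's. The paper also works in $\Sigma^n_\ell$, but instead of an explicit lattice path it chooses a line $L=\{t\vectg\alpha:t\in\real\}$ with $0<\alpha_1<\cdots<\alpha_n=1$ linearly independent over $\rational$, and lets $D$ be the union of the $n$-simplices that $L$ meets. Irrationality guarantees that, away from the origin, $L$ crosses the defining hyperplanes one at a time; this forces consecutive simplices along $L$ to share an $(n-1)$-face, and simplices at distance $\geq 2$ along $L$ to be separated by at least two hyperplanes, hence to meet in dimension at most $n-2$. The disc property then follows by the same induction you use.

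Your staircase construction trades this transversality argument for explicit combinatorics: the sliding-window description makes the facet-sharing and the identification of $\Delta_k\cap D_{i,k-1}$ with a single boundary facet completely transparent, and no appeal to linear independence over $\rational$ is needed. You also get $n(\ell-1)+1$ simplices rather than merely at least $\ell$, though this extra strength is not used downstream. Conversely, the paper's approach makes the geometric picture (a generic line threading through the triangulation) more visible and generalises immediately to any direction in general position, at the cost of a slightly less explicit description of which simplices actually appear. Both arguments ultimately rest on the same shelling step.
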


\begin{proof}
Write \dividedsigma{\ell}\ for the $n$-simplex $\Sigma^n_\ell$
subdivided by the family of planes given by
equation~\eqref{dividingRM.eq}.  Then \dividedsigma{\ell}\ and
\divided{\ell}\ are simplicially isomorphic, so it suffices to
construct a suitable disc $D$ in \dividedsigma{\ell}. We will
construct $D$ as the union of the $n$-simplices of
\dividedsigma{\ell}\ that meet a suitably chosen line $L$ in $\real^n$.
The case $n=2$, $\ell=4$ is illustrated in Figure~\ref{lineardisc.fig}.

\begin{figure}
\begin{center}
\psfrag{x}{$x_1$}
\psfrag{y}{$x_2$}
\psfrag{L}{$L$}
\includegraphics[scale=0.7]{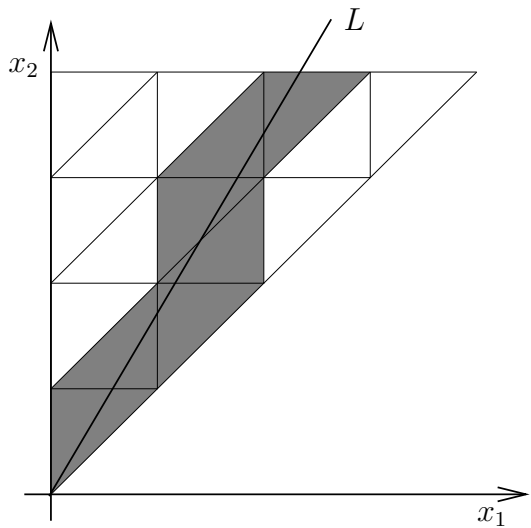}
\caption{Illustrating the construction of the disc $D$ of
  Lemma~\ref{lineardisc.lem} in the case $n=2$, $\ell=4$. A line $L$ with
  irrational slope $\alpha>1$ meets each line defining the
  triangulation exactly once, and except at \vect0\ never passes
  through the intersection of two such lines. We take $D$ to be the
  union of the 2-simplices intersecting $L$ (shaded grey). The disc
  $D$ contains at least $\ell$ $n$-simplices (here at least 4), since it must
  include at least one from each horizontal slice.}
\label{lineardisc.fig}
\end{center}
\end{figure}

Since $\real$ is infinite dimensional as a vector space over
$\rational$, we may choose $0<\alpha_1<\cdots<\alpha_n=1$ such that
$\{\alpha_1,\ldots,\alpha_n\}$ is linearly independent over 
\rational. Write $\vectg{\alpha}=(\alpha_1,\ldots,\alpha_n)$,
and let $L$ be the line $L=\{t\vectg\alpha:t\in\real\}$. 
Each plane in the family~\eqref{dividingRM.eq} may be written
in the form $\vect{c}^T\vect{x}=u$, where
$\vect{c}\in\integer^n$ and $u\in\integer$, and the linear independence
of $\{\alpha_1,\ldots,\alpha_n\}$ over \rational\ may be
used to show that
\begin{enumerate}
\item
$L$ meets each plane in the family~\eqref{dividingRM.eq} transversely; and
\item
each point of $L$ other than \vect0\ lies on at most one plane in
this family.
\end{enumerate}
Together these facts imply that, with the exception of simplices containing
\vect0, $L$ can meet only $n$- and $(n-1)$-simplices
of \dividedsigma\ell, and that if it intersects an $n$-simplex at
all it must intersect it in its interior. 

Observe that the line segment $\{t\vectg\alpha:0\leq t\leq\ell\}$ is
contained in $\Sigma^n_\ell$, and cuts each plane
$x_n=k$ for $k=1,\ldots,\ell$. Consequently $L$ must pass through at
least one $n$-simplex of \dividedsigma\ell\ lying in the slice
$\{\vect{x}:k-1\leq x_n\leq k\}$ for each $1\leq k\leq\ell$, and so 
passes through at least $\ell$ $n$-simplices of \dividedsigma\ell.
Suppose that $L$ passes through exactly $r$ $n$-simplices of 
\dividedsigma\ell, and label them consecutively
$\Delta_1,\ldots,\Delta_r$ in the order in which they are 
encountered when tracing $L$ in the direction \vectg\alpha.
We claim that
\[
D_{ij} = \bigcup_{k=i}^j \Delta_k
\]
is a disc for any $1\leq i\leq j\leq\ell$, from which the result follows.

Since the open ray $\{t\vectg\alpha:t>0\}$ only meets $n$- and
$(n-1)$-simplices of \dividedsigma\ell, consecutive $n$-simplices
$\Delta_k$ and $\Delta_{k+1}$ must intersect in an $(n-1)$-simplex. In
addition, for $d\geq2$ the simplices $\Delta_k$ and $\Delta_{k+d}$ are
separated by at least two planes from the
family~\eqref{dividingRM.eq}, and so meet in at most an
$(n-2)$-simplex. 
Since $D_{ii}=\Delta_i$ is a disc, and $D_{i,k+1}$ is the
result of gluing $D_{ik}$ and $\Delta_{k+1}$ along the
$(n-1)$-simplex $\Delta_k\cap\Delta_{k+1}$, it follows by induction
that $D_{ij}$ is a disc, as claimed.
\end{proof}

We now prove Theorem~\ref{modq.th}. The argument again proceeds by
converting a suitably large generalised keyring to a 2-component link,
but now we require additionally that the keys of the keyring are
copies of \comdiv{n+2}{q}. Our underlying approach is similar to that
of Fleming~\cite[Theorem 3.1]{fleming2007}, but differs from his in
the size of the keys and the method used to combine them to form the
second component of the link.

\begin{proof}[Proof of Theorem~\ref{modq.th}]
We show that the result holds for
\[
N=4q^2V(n,q)+n+\left\lceil\frac{4q^2-2}{n}\right\rceil+1,
\]
where $V(n,q)$ is given by~\eqref{vertexcount.eq} and equals the
number of vertices belonging to $\comdiv{2n+4}{q}$. Since
$V(n,q)\leq (n+1)\binom{2n+4}{n+1}q^n$, we conclude that $N$ grows no
faster than
$C(n+1)\binom{2n+4}{n+1}q^{n+2}$.

Given an embedding of \complete{N}\ in \ambient, let
$C_1,\ldots,C_{4q^2}$ be disjoint copies of
\comdiv{2n+4}{q}\ contained in \complete{N}, and use the remaining
$n+\lceil(4q^2-2)/n\rceil+1$ vertices and Corollary~\ref{n-sphere.cor}
to construct an $n$-sphere $L$ with at least $4q^2$
$n$-simplices. The complex $C_i$ is homeomorphic to \complete{2n+4}, and so
by Taniyama~\cite{taniyama2000} contains a two component link $J_i\cup
X_i$ such that $\link{J_i}{X_i}\neq 0$, and each component is a copy
of \comdiv{n+2}{q}. Applying Lemma~\ref{makekeyring.lem} to the link
$L\cup J_1\cup\cdots\cup J_{4q^2}\cup X_1\cup\cdots\cup X_{4q^2}$ we
obtain a generalised keyring $R\cup L_1\cup\cdots\cup L_q$, where
$\link{R}{L_i}\neq 0$ for each $i$, and each $L_i$ is a copy of
\comdiv{n+2}q.
We will use $R$ as one component of our link, and we will seek to
construct the second as a connect sum of some of the $L_i$. In what
follows we therefore consider homology classes in
$H_n(\ambient-R;\integer)$.

Orient the $L_i$ such that $\link{R}{L_i}=[L_i]$ is positive for each $i$,
and for $1\leq k\leq q$ consider the values of the sums
$\sum_{i=1}^k[L_i]$ mod $q$. Since there are $q$ sums and $q$ possible values 
modulo $q$, by the Pigeonhole Principle there must either be a sum
that is zero mod $q$, or else two sums that are equal modulo $q$. In
either case we obtain integers $a,b$ satisfying $1\leq a\leq b\leq q$
such that 
\[
\sum_{i=a}^b [L_i] \equiv 0\bmod q.
\]
From now on we restrict our attention to the spheres $L_a,\ldots,L_b$.

Our construction now departs from that of Fleming. Each component $L_i$ is
a copy of \comdiv{n+2}q, and as such has $n+2$ faces which are
triangulated $n$-simplices of sidelength $q$. We claim that it is
possible to choose distinct faces $\delta_i$, $\delta_i'$ of $L_i$,
each a copy of \divided{q}, and orientation reversing simplicial
isomorphisms $\psi_i:\delta_i\to\delta_{i+1}'$.  For $n\not\equiv
0\bmod 4$ this may be done by choosing distinct faces $\delta_i$,
$\delta_i'$ of $L_i$ arbitrarily, since in this case \divided{q}\ has
both orientation preserving and reversing symmetries, by
Remark~\ref{orientation.rem}. However, for $n\equiv 0\bmod 4$ we must
choose them inductively, beginning with $\delta_a$ and using the fact
that \divided{q}\ has at least one face of each orientation to choose
$\delta_{i+1}'$ based on the choice of $\delta_i$. The face
$\delta_{i+1}$ of $L_i$ may then be chosen arbitrarily from those
left.

By Lemma~\ref{lineardisc.lem} each face $\delta_i\cong\divided{q}$
contains a triangulated disc $D_i$ with $r\geq q$ $n$-simplices
$\Delta_{i1},\ldots,\Delta_{ir}$, such that
\[
(D_i)_{cd} = \bigcup_{k=c}^d \Delta_{ik}
\]
is a disc for each $1\leq c\leq d\leq r$. Let $\phi_i$ be the
restriction of $\psi_i$ to $D_i$, let $D_{i+1}'=\phi_i(D_i)$, and
for $1\leq j\leq r$ let $P_{ij}$ be the oriented sphere satisfying
\begin{align*}
P_{ij} \cap L_i &= \Delta_{ij}, & P_{ij} \cap L_{i+1} &= \phi_i(\Delta_{ij})
\end{align*}
that results from applying
Corollary~\ref{embeddedspheres.cor} to the pairs $(L_i,D_i)$ and
$(L_{i+1},D_{i+1}')$. 

For $1\leq k\leq r$ we now consider the sums $\sum_{j=1}^k[P_{ij}]$ modulo
$q$.
Since there are $q$ possible values mod $q$ and at least $q$ sums we
may again choose integers $c_i,d_i$ satisfying $1\leq c_i\leq d_i\leq r$ such 
that 
\[
\sum_{j=c_i}^{d_i} [P_{ij}] \equiv 0\bmod q.
\]
Let $Q_i=\sum_{j=c_i}^{d_i} P_{ij}$. Then $Q_i$ represents an $n$-sphere 
with all its vertices on $L_i\cup L_{i+1}$ and satisfying
\begin{align*}
Q_i \cap L_i &= (D_{i})_{c_id_i}, & 
Q_i \cap L_{i+1} &= \phi_i((D_{i})_{c_id_i}), &
\link{R}{Q_i}\equiv 0\bmod q.
\end{align*}

If $\link{R}{Q_i}\neq 0$ for some $i$ then we are done by setting $S=Q_i$,
so we may assume that in fact $\link{R}{Q_i}=0$ for all $i$. In that
case we let $S$ be the complex obtained from $L_a,\ldots,L_b$ and
$Q_a,\ldots,Q_{b-1}$ by omitting the interiors of the discs
$Q_a\cap {L_a},\ldots,Q_{b-1}\cap L_{b-1}$ and
$Q_a\cap {L_{a+1}},\ldots,Q_{b-1}\cap L_{b}$. Then $S$ is a
connect sum of $n$-spheres, hence an $n$-sphere, and as a chain we have
\[
S= \sum_{i=a}^b L_i + \sum_{i=a}^{b-1} Q_i.
\]
It follows that
\[
[S]= \sum_{i=a}^b [L_i] + \sum_{i=a}^{b-1} [Q_i]
   =  \sum_{i=a}^b [L_i] >0,
\]
and since also $\sum_{i=a}^b [L_i]\equiv 0\bmod q$ we are done.
\end{proof}

\begin{remark}
\label{modpbound.rem}
For $n=1$ the auxiliary sphere $\sph$
of Lemma~\ref{connectingsphere.lem} is not needed to construct
the keyring, reducing the number of vertices required in this
case to
\[
4q^2V(1,q) = 4q^2(6+15(q-1)) = 12q^2(5q-3),
\]
as given after the statement of the theorem.
\end{remark}

\section{An alternate proof of Theorem~\ref{linkingnumber.th}}
\label{linking-alternate.sec}

To further illustrate the applications of the triangulations of
Section~\ref{triangulations.sec} we give a second proof of 
Theorem~\ref{linkingnumber.th}, without the polynomial bound on the
number of vertices required. Namely, we show that given 
$\ell\in\naturalnumber$, for $N$ sufficiently large every embedding of
\complete{N}\ in \ambient\ contains a 2-component link with linking
number at least $\ell$ in absolute value. 

The proof we give is modelled on Flapan's original
proof~\cite{flapan2002} of the corresponding result for $n=1$. Her
argument is based on combining 2-component links with ``sufficiently
many vertices'', and for $n\geq 2$ we will replace this condition on
the number of vertices with a requirement that the components contain
triangulated $n$-simplices of sufficient side length. The side length
available will typically shrink when two components are combined
(unlike the number of vertices, which typically goes up), and
consequently this change leads to a significant change in the growth
of the number of vertices required.

\subsection{Splicing links}

In this section we establish higher dimensional 
analogues of Lemmas~2 and~1 of Flapan~\cite{flapan2002}. These
are Lemmas~\ref{4to3.lem} and~\ref{3to2.lem} below respectively.
In preparation for this we need an additional technical lemma
on triangulated $n$-simplices. 

\begin{lemma}
\label{sidelengthdrop.lem}
Deleting an arbitrary $M$-simplex from a triangulated $M$-simplex
of side-length $\ell$ leaves a triangulated $M$-simplex of side-length
at least $\lfloor M\ell/(M+1)\rfloor$. 
\end{lemma}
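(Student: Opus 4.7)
The plan is to exhibit $M+1$ distinguished sub-complexes of $\divided[M]{\ell}$, each a triangulated $M$-simplex of side-length $\ell'=\lfloor M\ell/(M+1)\rfloor$, positioned so that any single $M$-simplex of the triangulation lies in at most $M$ of them. When an arbitrary $M$-simplex is deleted, at least one of these sub-complexes will survive intact and provide the required sub-simplex.

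For $i=1,\ldots,M+1$ let $C_i = (\ell-\ell')\vect{e}_i + \divided[M]{\ell'}$, that is, the translate of $\divided[M]{\ell'}$ with vertices $\ell\vect{e}_i$ and $(\ell-\ell')\vect{e}_i + \ell'\vect{e}_j$ for $j\neq i$. Since $\ell-\ell'$ is an integer, translation by $(\ell-\ell')\vect{e}_i$ carries the family~\eqref{dividingplanes.eq} to itself, so $C_i$ is a subcomplex of $\divided[M]{\ell}$ simplicially isomorphic to $\divided[M]{\ell'}$; equivalently, $C_i$ is the region $\{x\in\Delta^M_\ell:x_i\geq\ell-\ell'\}$, cut out by a hyperplane in the family~\eqref{dividingplanes.eq} (for $i=M+1$ one uses that on $\sum_k x_k=\ell$ the condition $x_{M+1}\in\integer$ is equivalent to $\sum_{k=1}^M x_k\in\integer$).

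The key step is to bound $|I|$, where $I=\{i : \sigma\subseteq C_i\}$ and $\sigma$ is any $M$-simplex of $\divided[M]{\ell}$. If $i\in I$ then every vertex $v$ of $\sigma$ satisfies $v_i\geq\ell-\ell'$, so summing over $i\in I$ and using $\sum_k v_k=\ell$ together with $v_k\geq 0$ gives
\[
\ell \geq \sum_{i\in I} v_i \geq |I|(\ell-\ell').
\]
With $\ell'\leq M\ell/(M+1)$ this yields $|I|\leq M+1$. The boundary case $|I|=M+1$ can occur only when $(M+1)\mid M\ell$ and $\ell'=M\ell/(M+1)$, and it then forces every vertex $v$ of $\sigma$ to equal the barycenter $(\ell/(M+1))\vect{1}$; since $\sigma$ has $M+1$ distinct vertices this is impossible. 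Hence $|I|\leq M$, at least one $C_i$ does not contain the deleted $M$-simplex, and this $C_i$ is the required sub-simplex. The main subtlety is precisely this extremal case, in which the inequality above is tight and the non-degeneracy of $\sigma$ is genuinely needed; otherwise the bound on $|I|$ is strict and the conclusion is immediate.
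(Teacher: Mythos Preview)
Your proof is correct and takes essentially the same approach as the paper: both arguments use the $M+1$ ``corner'' sub-simplices $C_i=\{x\in\Delta^M_\ell:x_i\geq \ell-\ell'\}$ (noting $\ell-\ell'=\lceil\ell/(M+1)\rceil$) and show that any $M$-simplex of the triangulation misses at least one of them. The only difference is in the execution of this last step---the paper picks an interior point of the deleted simplex and uses that some barycentric coordinate is at most $1/(M+1)$, whereas you sum vertex coordinates over the index set $I$ and rule out $|I|=M+1$ via non-degeneracy---but these are minor variants of the same pigeonhole idea.
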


\begin{proof}
Let $\delta$ be the deleted simplex, and let $\vect{x}$ be a point 
in the interior of $\delta$. In barycentric co-ordinates on
$\Delta^M_\ell$ we have
\[
\vect{x} = \ell\sum_{i=1}^{M+1} t_i\vect{e}_i,
\]
and since $\sum t_i = 1$ we must have  $t_i\leq 1/(M+1)$ for some
$i$.  Let $\Delta$ be the intersection of $\divided[M]{\ell}$ with the
halfspace $x_i\geq \lceil \ell/(M+1)\rceil$. Then $\Delta$ is a
triangulated $M$-simplex contained in $\Delta^M_\ell$, and
$\Delta$ does not contain $\delta$ because $\Delta$
does not contain $\vect{x}$. Moreover, $\Delta$ has side-length
\[
\ell - \left\lceil\frac{\ell}{M+1}\right\rceil
=\left\lfloor\ell - \frac{\ell}{M+1}\right\rfloor
=\left\lfloor\frac{M\ell}{M+1}\right\rfloor,
\]
so we are done.
\end{proof}

\begin{lemma}
\label{4to3.lem}
Let $X_1\cup Y_1\cup X_2\cup Y_2$ be a $4$-component link contained
in some embedding of \complete{N}\ in \ambient. Suppose that for
some orientation of $X_1\cup Y_1\cup X_2\cup Y_2$ we have
$\link{X_1}{Y_1}\geq 1$ and $\link{X_2}{Y_2}= p\geq 1$, and suppose also
that each component contains a triangulated $n$-simplex of side-length
$\ell$ with $\ell^n\geq p$. Then \complete{N}\ contains disjoint
$n$-spheres $L$, $Z$ and $W$ such that
\begin{enumerate}
\item
$\link{L}{Z}=p_1\geq 1$ and $\link{L}{W}=p_2\geq p$ for some orientation of
the link $L\cup Z\cup W$;
\item
$L$ contains a triangulated $n$-simplex of side-length
at least $\lfloor n\ell/(n+1)\rfloor$;
\item
$Z$ is equal to either $X_1$ or $Y_1$;
\item
$W$ is equal to either $X_2$ or $Y_2$.
\end{enumerate}
\end{lemma}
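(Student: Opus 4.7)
My plan is to follow Flapan's strategy for the $n=1$ case, as outlined in Section~\ref{tactics.sec} and illustrated in Figure~\ref{integrallinking.fig}, adapted to higher dimensions by replacing her connecting cycles with spheres from Corollary~\ref{embeddedspheres.cor} and handling the side-length requirement via Lemma~\ref{sidelengthdrop.lem}.

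First I would dispose of the easy cases. If $|\link{X_1}{Y_2}|\geq p$, then setting $L=X_1$, $Z=Y_1$, $W=Y_2$ (reorienting to make both linking numbers positive) meets every conclusion, since $L=X_1$ retains its triangulated $n$-simplex of side-length $\ell\geq\lfloor n\ell/(n+1)\rfloor$. If instead $\link{X_2}{Y_1}\neq 0$, then $L=X_2$ works for the same reasons. I may therefore assume $|\link{X_1}{Y_2}|<p$ and $\link{X_2}{Y_1}=0$.

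Next I would choose an orientation-reversing simplicial isomorphism $\phi:D_1\to D_2$ between the triangulated $n$-simplices of side-length $\ell$ in $X_1$ and $X_2$ (possible by Remark~\ref{orientation.rem}, possibly after reorienting $X_2$), and apply Corollary~\ref{embeddedspheres.cor} to produce $n$-spheres $P_0,P_1,\ldots,P_k$ with $k=\ell^n\geq p$, whose vertices lie on $X_1\cup X_2$ and which satisfy $X_1+X_2+\sum_i P_i=0$ as integral chains. Remark~\ref{disjoint.rem} ensures the $P_i$ are disjoint from $Y_1\cup Y_2$, so in $H_n(\ambient-Y_j;\integer)$ we obtain
\[
\sum_i \link{P_i}{Y_1}=-\link{X_1}{Y_1}
\quad\text{and}\quad
\sum_i \link{P_i}{Y_2}=-\link{X_1}{Y_2}-p,
\]
the latter having absolute value at least $1$ under our reductions.

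The heart of the proof is then a pigeonhole/partial-sums argument. Organising the $n$-simplices of $D_1$ into a linear sub-chain $\Delta_1,\ldots,\Delta_r$ via Lemma~\ref{lineardisc.lem}, I would examine the partial sums of the $\link{P_i}{Y_2}$ in order to locate a contiguous subset $I=\{c,\ldots,d\}$ such that the chain $L=X_1+\sum_{i\in I}P_i$ represents an $n$-sphere, obtained geometrically from $X_1$ by surgery swapping the sub-disc $\bigcup_{i\in I}\Delta_i\subseteq D_1$ for its $\phi$-image via the cylinder on its boundary, and satisfying $|\link{L}{Y_2}|\geq p$ and $\link{L}{Y_1}\geq 1$. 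Applying Lemma~\ref{sidelengthdrop.lem} to a single $n$-simplex of $\divided{\ell}$ containing the swapped sub-disc then shows that $L$ contains a triangulated $n$-simplex of side-length at least $\lfloor n\ell/(n+1)\rfloor$. Setting $Z=Y_1$ and $W=Y_2$ (reorienting as needed) completes the construction.

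The main obstacle will be the combinatorial step: in higher dimensions the individual linking numbers $\link{P_i}{Y_2}$ may all be small compared with $p$, so no single swap will suffice. The chosen subset $I$ must therefore simultaneously (i) push $|\link{L}{Y_2}|$ up to at least $p$, (ii) keep $\link{L}{Y_1}$ nonzero (the choice $I=\{0,1,\ldots,k\}$, which would make $L=-X_2$, must be excluded by a small perturbation using the assumption $\link{X_2}{Y_1}=0$), and (iii) remain confined to a small region of $D_1$ so that Lemma~\ref{sidelengthdrop.lem} continues to deliver a triangulated $n$-simplex of side-length $\lfloor n\ell/(n+1)\rfloor$ in $L$. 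Balancing these three demands is precisely the higher-dimensional counterpart of Flapan's original partial-sums argument.
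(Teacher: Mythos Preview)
Your proposal is more complicated than necessary, and the complication creates a genuine gap in the side-length step.

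The paper's key simplification is in the easy-case reduction. Instead of taking $L=X_1$ when $|\link{X_1}{Y_2}|\geq p$, observe that if $\link{X_1}{Y_2}\neq 0$ at all you may set $L=Y_2$, $Z=X_1$, $W=X_2$: then $\link{L}{W}=\link{Y_2}{X_2}=p$ automatically, and $\link{L}{Z}=\link{Y_2}{X_1}\neq 0$. This allows you to assume $\link{X_1}{Y_2}=0$ exactly, not merely $|\link{X_1}{Y_2}|<p$. With $[X_1]=0$ in $H_n(\ambient-Y_2;\integer)$ the equation $\sum_{j=0}^{\ell^n}[P_j]=-p$ has $\ell^n+1>p$ terms, so some single index $q$ has $[P_q]\geq 0$. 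The paper then forms $L$ as either $P_q+X_2$ or $X_1+P_q+X_2$, according to whether $[P_q]$ is nonzero or zero in $H_n(\ambient-Y_1)$. No linear-disc organisation, no contiguous subsets, no partial-sums balancing act.

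The payoff for the side-length condition is immediate: $L$ contains $D_2$ minus (at most) the single $n$-simplex $X_2\cap P_q$, so Lemma~\ref{sidelengthdrop.lem} applies directly. By contrast, your plan removes a sub-disc $\bigcup_{i\in I}\Delta_i$ consisting of possibly many $n$-simplices, and your claim that this region is ``contained in a single $n$-simplex of $\divided{\ell}$'' is false in general; Lemma~\ref{sidelengthdrop.lem} deletes exactly one $n$-simplex, so it does not cover this situation. That is the gap. (A minor point: for $n\equiv 0\bmod 4$ Remark~\ref{orientation.rem} gives no orientation-reversing symmetry of $\divided{\ell}$; the paper instead reverses orientation on both $X_1$ and $Y_1$, which preserves $\link{X_1}{Y_1}\geq 1$.)
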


\begin{proof}
As in Flapan~\cite{flapan2002}, if $\link{X_2}{Y_1}$ is non-zero we
may set $L=X_2$, $Z=Y_1$, and $W=Y_2$; and if $\link{Y_2}{X_1}$ is non-zero we
may set $L=Y_2$, $Z=X_1$, and $W=X_2$. So in what follows we
may assume that $\link{X_1}{Y_2}=\link{X_2}{Y_1}=0$. 

Let $D_i$ be a \divided{\ell}\ contained in $X_i$, 
for each $i$, and let $\phi:D_1\to D_2$ be a simplicial isomorphism.
After reversing orientation on both $X_1$ and $Y_1$ if necessary we
may assume that $\phi$ reverses orientation, and so we may apply
Corollary~\ref{embeddedspheres.cor} to the pairs $(X_1,D_1)$ and
$(X_2,D_2)$. We label the resulting spheres $P_0,\ldots,P_{\ell^n}$
as in the statement of the corollary, and following Flapan the
equation
\[
[X_1] + [X_2] + \sum_{j=0}^{\ell^n} [P_j]=0
\]
holds in the $n$th homology group $H_n(\ambient-Y_2;\integer)$.

By our assumption that $\link{X_1}{Y_2}=0$ we have $[X_1]=0$ in
$H_n(\ambient-Y_2;\integer)$, so 
\[
0 < p = [X_2] = - \sum_{j=0}^{\ell^n} [P_j].
\]
The right hand side consists of $\ell^n+1>p$ terms, so for some index
$q$ we must have $[P_q]\geq 0$. We consider two cases, according to
whether or not $[P_q]=0$ in $H_n(\ambient-Y_1;\integer)$.

If $[P_q]$ is non-zero in $H_n(\ambient-Y_1;\integer)$ then we construct
$L$ from $P_q$ and $X_2$ by deleting the interior of the disc
$X_2\cap P_q$. $L$ is the connect sum of the $n$-spheres $P_q$ and $X_2$,
and so is itself an $n$-sphere. As a chain we have
$L=P_q+X_2$, and therefore
\begin{align*}
[L] &= [P_q]+[X_2] \geq p   & \text{in $H_n(\ambient-Y_2 ;\integer)$}, \\
[L] &= [P_q]+[X_2] = [P_q] \neq 0   &\text{in $H_n(\ambient-Y_1 ;\integer)$}. 
\end{align*}
So we obtain the desired link by letting $Z=Y_1$ and $W=Y_2$, and
re-orienting $Z$ if necessary so that $\link{L}{Z}$ is positive.

If $[P_q]=0$ in $H_n(\ambient-Y_1;\integer)$ then we construct $L$
from $X_1$, $X_2$ and $P_q$ by deleting the interiors of the discs
$X_i\cap P_q$. Clearly, $L$ is again an $n$-sphere. As a chain we
have $L=X_1+P_q+X_2$, and therefore
\begin{align*}
[L] &= [X_1]+[P_q]+[X_2] = [P_q]+[X_2] \geq p  
           & \text{in $H_n(\ambient-Y_2 ;\integer)$}, \\
[L] &= [X_1]+[P_q]+[X_2] = [X_1] \geq 1 
       & \text{in $H_n(\ambient-Y_1 ;\integer)$}. 
\end{align*}
So we obtain the desired link by letting $Z=Y_1$ and $W=Y_2$. 

In every case above $Z$ was equal to either $X_1$ or $Y_1$, and $W$
was equal to either $X_2$ or $Y_2$.  To complete the proof we must
show that $L$ contains a triangulated $n$-simplex of side-length at
least $\lfloor n\ell/(n+1)\rfloor$.  If $q=0$ then $L$ contains $D_2$
and we are done, and otherwise $L$ contains $D_2\setminus (X_2\cap P_q)$ and we
are done by Lemma~\ref{sidelengthdrop.lem}.
\end{proof}

\begin{lemma}
\label{3to2.lem}
Let $L\cup Z\cup W$ be a $3$-component link contained in some embedding
of \complete{N}\ in \ambient, and suppose that for some
orientation of $L\cup Z\cup W$ we have   $\link{L}{Z}=p_1>0$,
$\link{L}{W}=p_2>0$. Suppose that $Z$ and $W$ contain triangulated
simplices $\Delta_Z$ and $\Delta_W$ of side-length $\ell$, with 
$\ell^n\geq p_1+p_2$, and that there is an orientation reversing
simplicial isomorphism $\phi:\Delta_Z\to\Delta_W$. Then \complete{N}\
contains an $n$-sphere $J$ disjoint from $L$ such that
\begin{enumerate}
\item
$\link{L}{J}\geq p_1+p_2$ for some orientation of $L\cup J$;
\item
\label{sidelength.item}
$J$ contains a triangulated $n$-simplex of side-length at least
$\lfloor n\ell/(n+1)\rfloor$.
\end{enumerate}
\end{lemma}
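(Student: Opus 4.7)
The plan is to apply Corollary~\ref{embeddedspheres.cor} to the pairs $(Z,\Delta_Z)$ and $(W,\Delta_W)$ using the given orientation-reversing simplicial isomorphism $\phi$. This produces $n$-spheres $P_0,P_1,\ldots,P_{\ell^n}$, each with all vertices on $Z\cup W$ and so, by Remark~\ref{disjoint.rem}, disjoint from $L$. For $j\geq 1$ the sphere $P_j$ meets $Z$ in a single $n$-simplex $\sigma_j$ of $\Delta_Z$ and meets $W$ in $\phi(\sigma_j)$, while $P_0$ meets $Z$ and $W$ in the complementary discs $\overline{Z\setminus\Delta_Z}$ and $\overline{W\setminus\Delta_W}$. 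These spheres satisfy the chain relation $Z+W+\sum_{j=0}^{\ell^n}P_j=0$.

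Next I would pass to $H_n(\ambient-L;\integer)$, orienting so that $[Z]=p_1>0$ and $[W]=p_2>0$; the chain relation then gives $\sum_{j=0}^{\ell^n}[P_j]=-(p_1+p_2)$. Since there are $\ell^n+1$ summands and $\ell^n\geq p_1+p_2$, not every summand can be at most $-1$ (otherwise the sum would be at most $-(\ell^n+1)<-(p_1+p_2)$). Hence some index $q$ satisfies $[P_q]\geq 0$.

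I would then construct $J$ as the simplicial connect sum of $Z$, $W$, and $P_q$, obtained by removing the interiors of the discs $P_q\cap Z$ and $P_q\cap W$ from each of the three spheres and gluing along the resulting boundary spheres. The complement of two disjoint open discs in $P_q$ is a cobordism between the two boundary $(n-1)$-spheres, so the result is topologically an $n$-sphere. The orientation conventions of Corollary~\ref{embeddedspheres.cor} ensure that the shared simplex structures cancel in the chain sum, so $J=Z+W+P_q$ as an $n$-chain and therefore $[J]=p_1+p_2+[P_q]\geq p_1+p_2$, establishing property~(1). For property~(\ref{sidelength.item}), when $q\geq 1$ the sphere $J$ contains $\Delta_Z$ minus the interior of the single $n$-simplex $\sigma_q$, so Lemma~\ref{sidelengthdrop.lem} with $M=n$ supplies a triangulated $n$-simplex of side-length at least $\lfloor n\ell/(n+1)\rfloor$ inside $J$; when $q=0$ the disc removed from $Z$ is $\overline{Z\setminus\Delta_Z}$, whose interior is disjoint from $\Delta_Z$, so all of $\Delta_Z$ survives in $J$ and the bound holds trivially.

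The main obstacle I anticipate is the orientation bookkeeping: arranging orientations on $Z$, $W$, and the $P_j$ consistently enough that $[Z]=p_1>0$, $[W]=p_2>0$ both hold and that the shared simplices in $Z+W+P_q$ cancel with the correct signs, so that $J$ really equals $Z+W+P_q$ as a chain (rather than some twisted combination). The hypothesis that $\phi$ is orientation reversing is exactly what makes Corollary~\ref{embeddedspheres.cor} applicable and the cancellations work as intended, and it is the only place in the argument where that hypothesis enters.
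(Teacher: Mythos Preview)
Your proposal is correct and follows essentially the same argument as the paper's proof: apply Corollary~\ref{embeddedspheres.cor} to $(Z,\Delta_Z)$ and $(W,\Delta_W)$, use the chain relation in $H_n(\ambient-L;\integer)$ and a pigeonhole count to find $[P_q]\geq 0$, form $J=Z+P_q+W$ by deleting the interiors of $P_q\cap Z$ and $P_q\cap W$, and handle the side-length condition via Lemma~\ref{sidelengthdrop.lem} (with the $q=0$ case leaving the full $\Delta_Z$ intact). The only cosmetic difference is that the paper verifies condition~(\ref{sidelength.item}) using $\Delta_W$ rather than $\Delta_Z$, but this is symmetric.
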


\begin{proof}
As in the proof of Lemma~\ref{4to3.lem} we apply 
Corollary~\ref{embeddedspheres.cor} to the pairs $(Z,\Delta_Z)$ and
$(W,\Delta_W)$, obtaining spheres $P_0,\ldots,P_{\ell^n}$. 
In the homology group $H_n(\ambient-L;\integer)$ we have the 
equation
\[
[Z] + [W] + \sum_{j=0}^{\ell^n} [P_j]=0,
\]
so that
\[
p_1+p_2 = [Z] + [W] = - \sum_{j=0}^{\ell^n} [P_j].
\]
As in the proof of Lemma~\ref{4to3.lem} above, the right-hand side
has $\ell^n+1>p_1+p_2$ terms, so there must be an index $q$ such that
$[P_q]\geq 0$. Let $J$ be the $n$-sphere obtained from $Z$, $P_q$ and
$W$ by deleting the interiors of the discs $P_q\cap Z$ and $P_q\cap W$.
Then $J$ is disjoint from $L$ by Remark~\ref{disjoint.rem}, and as a chain 
$J=Z+P_q+W$, so
\[
[J] = [Z]+[P_q]+[W] \geq p_1+p_2
\] 
in $H_n(\ambient-L;\integer)$. Condition~\eqref{sidelength.item} above holds
by the same argument as in Lemma~\ref{4to3.lem}, and the result
follows.
\end{proof}

Combining Lemmas~\ref{4to3.lem} and~\ref{3to2.lem} 
we obtain the following:
\begin{corollary}
\label{fourtotwo.lem}
Let $X_1\cup Y_1\cup X_2\cup Y_2$ be a $4$-component link contained
in some embedding of \complete{N}\ in \ambient. Suppose that 
\begin{enumerate}
\item
for
some orientation of $X_1\cup Y_1\cup X_2\cup Y_2$ we have
$\link{X_1}{Y_1}\geq 1$ and $\link{X_2}{Y_2}= p\geq 1$; 
\item
each component contains a triangulated $n$-simplex of side-length
$\ell$ with $\ell^n\geq 2p$;
\item
\label{orientation.item}
either $n\not\equiv 0\bmod 4$, or $X_1$ and $Y_1$ each contain 
two such triangulated $n$-simplices, one of each possible orientation.
\end{enumerate}
Then \complete{N}\ contains disjoint $n$-spheres $L$ and $J$, each
containing a triangulated $n$-simplex of side length at least $\lfloor
n\ell/(n+1)\rfloor$, and such that $\link{L}{J}\geq p+1$.
\end{corollary}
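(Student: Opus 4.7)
The plan is to apply Lemma~\ref{4to3.lem} and then Lemma~\ref{3to2.lem} in succession, tracking triangulated-simplex side-lengths and managing the orientation-reversing isomorphism required by the second lemma.

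First, I apply Lemma~\ref{4to3.lem} to $X_1 \cup Y_1 \cup X_2 \cup Y_2$; the side-length condition $\ell^n \geq p$ required there is implied by $\ell^n \geq 2p$. This yields an $n$-sphere $L$ containing a triangulated $n$-simplex of side-length at least $\lfloor n\ell/(n+1)\rfloor$, together with $Z \in \{X_1, Y_1\}$ and $W \in \{X_2, Y_2\}$ satisfying $\link{L}{Z} = p_1 \geq 1$ and $\link{L}{W} = p_2 \geq p$. Since $Z$ and $W$ are components of the original link, they retain triangulated $n$-simplices of the full side-length $\ell$.

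Next, I apply Lemma~\ref{3to2.lem} to the three-component link $L \cup Z \cup W$ using triangulated simplices $\Delta_Z \subseteq Z$ and $\Delta_W \subseteq W$ of side-length $\ell$. This will produce the desired $n$-sphere $J$ with a triangulated $n$-simplex of side-length at least $\lfloor n\ell/(n+1)\rfloor$ and $\link{L}{J} \geq p_1 + p_2 \geq p + 1$. Supplying the required orientation-reversing simplicial isomorphism $\phi : \Delta_Z \to \Delta_W$ is straightforward when $n \not\equiv 0 \bmod 4$: Remark~\ref{orientation.rem} gives $\divided{\ell}$ an orientation-reversing self-symmetry that may be post-composed with any simplicial isomorphism. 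When $n \equiv 0 \bmod 4$ no such self-symmetry exists, and this is precisely the role of hypothesis~\eqref{orientation.item}: because $Z \in \{X_1, Y_1\}$ and both $X_1$ and $Y_1$ contain triangulated $n$-simplices of each possible orientation, I can select $\Delta_Z$ with orientation compatible with a chosen $\Delta_W$ so that a direct simplicial isomorphism already reverses orientation. Note that the analogous orientation issue for the first step is handled within Lemma~\ref{4to3.lem} itself by the trick of simultaneously reversing the orientations of $X_1$ and $Y_1$, which preserves $\link{X_1}{Y_1}$; this trick is unavailable at the second step because it would flip the sign of $\link{L}{Z}$.

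The main obstacle is verifying the side-length hypothesis $\ell^n \geq p_1 + p_2$ of Lemma~\ref{3to2.lem}, as Lemma~\ref{4to3.lem} supplies only the lower bounds $p_1 \geq 1$ and $p_2 \geq p$. I expect to handle this by disposing of easy cases first: if $\link{X_1}{Y_1} \geq p + 1$ then the conclusion holds immediately with $L = X_1, J = Y_1$, so I may assume $\link{X_1}{Y_1} \leq p$; and if the output of Lemma~\ref{4to3.lem} already satisfies $p_2 \geq p + 1$ then $J = W$ suffices directly. In the remaining situation the index $q$ in the proof of Lemma~\ref{4to3.lem} must be chosen to guarantee $p_1 + p_2 \leq 2p \leq \ell^n$. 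A case analysis using the two identities $\sum_j [P_j] = -p$ in $H_n(\ambient - Y_2; \integer)$ and $\sum_j [P_j] = -\link{X_1}{Y_1}$ in $H_n(\ambient - Y_1; \integer)$, each with $\ell^n + 1 > 2p$ terms, should allow such a choice via pigeonhole, after which Lemma~\ref{3to2.lem} completes the proof.
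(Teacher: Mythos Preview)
Your overall strategy---apply Lemma~\ref{4to3.lem}, then Lemma~\ref{3to2.lem}, and handle the orientation issue via hypothesis~\eqref{orientation.item}---matches the paper exactly, and your treatment of the orientation-reversing isomorphism is correct.

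The place where you diverge from the paper is the verification of the side-length hypothesis $\ell^n\geq p_1+p_2$, and here you make things much harder than necessary. You observe that if $p_2\geq p+1$ you can take $J=W$ and stop, but you overlook the symmetric case $p_1\geq p+1$, where $J=Z$ works just as well (recall $Z\in\{X_1,Y_1\}$ still carries a triangulated $n$-simplex of side-length $\ell\geq\lfloor n\ell/(n+1)\rfloor$). Once both of these cases are set aside, the only remaining possibility is $p_1\leq p$ and $p_2\leq p$, hence $p_1+p_2\leq 2p\leq\ell^n$, and Lemma~\ref{3to2.lem} applies immediately. This is precisely the paper's argument: if $\ell^n<p_1+p_2$ then $p_1+p_2>2p$, so some $p_i\geq p+1$ and we finish with $J\in\{Z,W\}$.

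Because you miss the $p_1\geq p+1$ branch, you are forced to propose re-opening the proof of Lemma~\ref{4to3.lem} and re-choosing the index $q$ via an unspecified pigeonhole argument. That step is not needed, and as stated it is not justified: you would have to exhibit a $q$ with $[P_q]\geq 0$ in $H_n(\ambient-Y_2;\integer)$ \emph{and} with the resulting $p_1+p_2\leq 2p$, and the two summation identities you cite do not obviously yield this simultaneously. Drop this detour, add the one-line $p_1\geq p+1$ case, and your proof is complete and identical to the paper's.
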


\begin{proof}
The hypotheses of Lemma~\ref{4to3.lem} are satisfied, so we
obtain a three component link $L\cup Z\cup W$ satisfying the conditions
given in that Lemma. These conditions imply the hypotheses of 
Lemma~\ref{3to2.lem}, except perhaps the condition that
$\ell^n\geq p_1+p_2$ and the condition that $\phi$ may be chosen to
reverse orientation. 

If the hypothesis $\ell^n\geq p_1+p_2$ does not hold then we must
have $p_1+p_2>2p$, which implies $p_i\geq p+1$ for some $i$. So if this
occurs we are done by simply letting $J$ be either $Z$ or $W$, as 
appropriate. 

To see that the condition on $\phi$ is satisfied we use our third
hypothesis above.
If $n\not\equiv 0\bmod 4$ then \divided{\ell}\ has an orientation 
reversing symmetry, and otherwise $Z$ is equal to either $X_1$ or $Y_1$,
and so contains a \divided{\ell}\ of each orientation. We may therefore
choose $\Delta_Z$ and $\Delta_W$ to have opposite orientations, and
apply Lemma~\ref{3to2.lem} to get the desired result.
\end{proof}

\subsection{Theorem~\ref{linkingnumber.th}, revisited}

Using the results of the previous section we re-prove 
Theorem~\ref{linkingnumber.th} in the following weakened form.

\begin{theorem}
\label{linkingnumber-alternate.th}
Given $\lambda\geq 2$, let 
$\mu = \left\lceil \sqrt[n]{2(\lambda-1)}\right\rceil$,
and suppose that $N$ is sufficiently large that \complete{N}\ contains
disjoint copies of \comdiv{2n+4}{2^i\mu}\ for $i=0,\ldots,\lambda-2$,
and an additional disjoint copy of \comdiv{2n+4}{2^{\lambda-2}\mu}.
Then every embedding
of \complete{N}\ in $\ambient$ contains a two-component link
$L\cup J$ such that, for some orientation of the components,
$\link{L}{J}\geq\lambda$.
\end{theorem}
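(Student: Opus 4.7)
The plan is to prove Theorem~\ref{linkingnumber-alternate.th} by iteratively applying Corollary~\ref{fourtotwo.lem}, building the linking number up one unit at a time. At each step the side-length of the triangulated simplex inside each component drops by a factor of at least $n/(n+1)\geq 1/2$, while the required side-length grows only as $\sqrt[n]{2p}$; the list of copies $\comdiv{2n+4}{2^i\mu}$ supplied in the hypothesis is tuned precisely so that the geometric decay of the available side-length keeps pace with the growth of the requirement.

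Concretely, in each of the $\lambda$ supplied copies $\comdiv{2n+4}{s}$, Taniyama's theorem applied to the underlying $\complete{2n+4}$-structure provides a $2$-component link whose components are boundaries of disjoint $(n+1)$-simplices of the triangulated structure. Each component therefore carries $n+2$ distinguished faces, each a copy of $\divided[n]{s}$. I will order the copies so that the two largest are $C_0,C_1$ (both of side $s_0=s_1=2^{\lambda-2}\mu$) and $C_k$ has side $s_k=2^{\lambda-1-k}\mu$ for $2\leq k\leq \lambda-1$; let $X_k\cup Y_k$ denote the chosen Taniyama link in $C_k$. Taking $L_0\cup J_0=X_0\cup Y_0$ as the base case, I will construct inductively links $L_k\cup J_k$ with $|\link{L_k}{J_k}|\geq k+1$ and with each component containing a $\divided[n]{\ell_k}$ of side $\ell_k\geq s_{k+1}$, by applying Corollary~\ref{fourtotwo.lem} with $X_1\cup Y_1=X_k\cup Y_k$ and $X_2\cup Y_2=L_{k-1}\cup J_{k-1}$. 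After $\lambda-1$ steps this produces the required link.

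The side-length hypothesis of the corollary at step $k$ reduces to $s_k^n\geq 2k$, which follows from
\[
s_k^n = 2^{(\lambda-1-k)n}\mu^n \geq 2^{(\lambda-1-k)n}\cdot 2(\lambda-1)\geq 2(\lambda-1) \geq 2k,
\]
using $\mu^n\geq 2(\lambda-1)$ and $k\leq\lambda-1$. The corollary then furnishes a new link whose components each contain a $\divided[n]{\ell_k}$ with $\ell_k\geq\lfloor ns_k/(n+1)\rfloor\geq\lfloor s_k/2\rfloor=2^{\lambda-2-k}\mu=s_{k+1}$, so the inductive invariant is maintained. In particular, at the last step ($k=\lambda-1$) the common side is exactly $\mu$ and the requirement $\mu^n\geq 2(\lambda-1)$ is the defining property of $\mu$.

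The one subtlety to handle is condition (3) of Corollary~\ref{fourtotwo.lem}, which in the case $n\equiv 0\bmod 4$ demands that the fresh components $X_k,Y_k$ each carry $\divided[n]{s_k}$'s of both orientation classes. I plan to address this by observing that the boundary of an $(n+1)$-simplex has $n+2\geq 6$ faces which acquire induced orientations $(-1)^i$ relative to the canonical vertex-order orientation of $\divided[n]{s_k}$; an order-preserving bijection between two such faces of opposite sign is a simplicial isomorphism that reverses the canonical orientations, so by Remark~\ref{orientation.rem} no orientation-preserving automorphism can convert one to the other when $n\equiv 0\bmod 4$, confirming that both classes are present among the faces. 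The rest is straightforward bookkeeping; the hard part is less the topology than ensuring that the geometric schedule of side-lengths is self-consistent with the $n/(n+1)$ shrinkage factor, which the choice $\mu=\lceil\sqrt[n]{2(\lambda-1)}\rceil$ just barely accomplishes.
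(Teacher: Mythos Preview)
Your approach is correct and matches the paper's proof, with only cosmetic differences in indexing (the paper uses $C_1,\ldots,C_\lambda$ and links $L_i\cup J_i$ with $\link{L_i}{J_i}\geq i$, but the inductive scheme and use of Corollary~\ref{fourtotwo.lem} with the fresh Taniyama link playing the role of $X_1\cup Y_1$ are identical). One small point of precision: the side-length hypothesis at step $k$ is $s_k^n\geq 2p$ for the \emph{actual} linking number $p=\link{L_{k-1}}{J_{k-1}}$, not merely $2k$; your computation $s_k^n\geq 2(\lambda-1)$ handles this once you note (as the paper does explicitly) that if $p\geq\lambda$ the induction may terminate early.
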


\begin{proof}
Given an embedding of $\complete{N}$ in \ambient, 
let $C_1,\ldots,C_\lambda$ be disjoint subcomplexes of \complete{N}\
such that $C_1$ is a \comdiv{2n+4}{2^{\lambda-2}\mu}, and 
$C_i$ is a \comdiv{2n+4}{2^{\lambda-i}\mu}\ for $i=2,\ldots,\lambda$.  
Each $C_i$ is
homeomorphic to \complete{2n+4}, and so by
Taniyama~\cite{taniyama2000} contains a two component link $S_i\cup
T_i$ which we may orient such that $\link{S_i}{T_i}\geq 1$. We will
use these to inductively construct links $L_i\cup J_i$ such that
\begin{enumerate}
\item
$\link{L_i}{J_i}\geq i$;
\item
all vertices of $L_i\cup J_i$ lie in $C_1\cup\cdots\cup C_i$ (and so
$L_i\cup J_i$ is disjoint from $C_j$ for $j>i$);
\item
\label{embeddeddisc.item}
for $i<\lambda$ the spheres $L_i$ and $J_i$ each contain a
triangulated $n$-complex of side-length at least $2^{\lambda-i-1}\mu$.
\end{enumerate}
The link $L_\lambda\cup J_\lambda$ is then the required link.

Each component $S_i$, $T_i$ is isomorphic to the boundary of a
triangulated $(n+1)$-simplex of side-length equal to that of $C_i$,
and as such has $n+1$ faces which are each a triangulated $n$-simplex
of this same side-length.  For the base case we may therefore simply
let $L_1\cup J_1=S_1\cup T_1$.

Given $1\leq i\leq\lambda-1$, suppose that we have constructed
$L_i\cup J_i$ but not yet $L_{i+1}\cup J_{i+1}$.  Let
$\link{S_i}{T_i}=p\geq i$. If $p\geq \lambda$ then we simply set
$L_j\cup J_j=S_i\cup T_i$ for $j\geq i$ and the construction is
complete, so suppose that $p<\lambda$. Then every component of the
link $S_{i+1}\cup T_{i+1}\cup L_i\cup J_i$ contains a triangulated
$n$-simplex of side-length at least $\ell = 2^{\lambda-i-1}\mu\geq
\mu$, and $\ell$ satisfies $\ell^n\geq \mu^n \geq 2(\lambda-1)\geq
2p$.  Moreover, as the boundary of a $\comdiv{n+1}{\ell}$, each
component of $S_{i+1}\cup T_{i+1}$ must contain at least one
$\divided{\ell}$ face of each orientation. Working entirely within
the \complete{M}\ spanned by the vertices of $C_1\cup\cdots\cup
C_{i+1}$ we may therefore apply Corollary~\ref{fourtotwo.lem}
to obtain a $2$-component link $L_{i+1}\cup J_{i+1}$ satisfying 
$\link{L_{i+1}}{J_{i+1}}\geq p+1\geq i+1$. 

Each component of $L_{i+1}\cup J_{i+1}$ contains a triangulated
$n$-simplex of side-length at least
\[
\left\lfloor \frac{n\ell}{n+1}\right\rfloor =
\left\lfloor \frac{2^{\lambda-i-1}n\mu}{n+1}\right\rfloor.
\]
Now $\frac{n}{n+1}\geq \frac{1}{2}$, so for
$i<\lambda-1$ the quantity $2^{\lambda-i-2}\mu$ is an integer satisfying
\[
\frac{2^{\lambda-i-1}n\mu}{n+1} \geq \frac{2^{\lambda-i-1}\mu}{2}
= 2^{\lambda-i-2}\mu,
\]
and therefore 
\[
\left\lfloor \frac{n\ell}{n+1}\right\rfloor =
\left\lfloor \frac{2^{\lambda-i-1}n\mu}{n+1}\right\rfloor \geq 
2^{\lambda-i-2}\mu = 2^{\lambda-(i+1)-1}\mu. 
\]
This establishes condition~\eqref{embeddeddisc.item} above
when $i+1<\lambda$, completing
the inductive step.
\end{proof}

\bibliographystyle{plain} 
\bibliography{linking}

\begin{thebibliography}{10}

\bibitem{conway-gordon83}
J.~H. Conway and C.~McA. Gordon.
\newblock Knots and links in spatial graphs.
\newblock {\em J. Graph Theory}, 7(4):445--453, 1983.

\bibitem{flapan2002}
Erica Flapan.
\newblock Intrinsic knotting and linking of complete graphs.
\newblock {\em Algebr. Geom. Topol.}, 2:371--380 (electronic), 2002.
\newblock E-print arXiv:math/0205231v1.

\bibitem{flapan-mellor-naimi2008}
Erica Flapan, Blake Mellor, and Ramin Naimi.
\newblock Intrinsic linking and knotting are arbitrarily complex.
\newblock {\em Fund. Math.}, 201(2):131--148, 2008.
\newblock E-print arXiv:math/0610501v6.

\bibitem{flapan-pommersheim-foisy-naimi2001}
Erica Flapan, James Pommersheim, Joel Foisy, and Ramin Naimi.
\newblock Intrinsically {$n$}-linked graphs.
\newblock {\em J. Knot Theory Ramifications}, 10(8):1143--1154, 2001.

\bibitem{fleming2007}
Thomas Fleming.
\newblock Intrinsically linked graphs with knotted components.
\newblock {\em J. Knot Theory Ramifications}, 21(7):1250065, 10, 2012.
\newblock E-print arXiv:0705.2026.

\bibitem{fleming-diesl2005}
Thomas Fleming and Alexander Diesl.
\newblock Intrinsically linked graphs and even linking number.
\newblock {\em Algebr. Geom. Topol.}, 5:1419--1432 (electronic), 2005.
\newblock E-print arXiv:math/0511133v1.

\bibitem{hatcher-at}
Allen Hatcher.
\newblock {\em Algebraic topology}.
\newblock Cambridge University Press, Cambridge, 2002.

\bibitem{lovasz-schrijver1998}
L{\'a}szl{\'o} Lov{\'a}sz and Alexander Schrijver.
\newblock A {B}orsuk theorem for antipodal links and a spectral
  characterization of linklessly embeddable graphs.
\newblock {\em Proc. Amer. Math. Soc.}, 126(5):1275--1285, 1998.

\bibitem{melikhov2009}
S.~A. Melikhov.
\newblock The van {K}ampen obstruction and its relatives.
\newblock {\em Tr. Mat. Inst. Steklova}, 266(Geometriya, Topologiya i
  Matematicheskaya Fizika. II):149--183, 2009.
\newblock Eprint arXiv:math/0612082v4.

\bibitem{melikhov2011}
S.~A. Melikhov.
\newblock Combinatorics of embeddings.
\newblock Eprint arXiv:1103.5457v2, 2011.

\bibitem{negami1998-good}
Seiya Negami.
\newblock Ramsey-type theorems for spatial graphs and good drawings.
\newblock {\em J. Combin. Theory Ser. B}, 72(1):53--62, 1998.

\bibitem{negami2001}
Seiya Negami.
\newblock Note on {R}amsey theorems for spatial graphs.
\newblock {\em Theoret. Comput. Sci.}, 263(1-2):205--210, 2001.
\newblock Combinatorics and computer science (Palaiseau, 1997).

\bibitem{ramirezalfonsin-2005}
J.~L. Ram{\'{\i}}rez~Alfons{\'{\i}}n.
\newblock Knots and links in spatial graphs: a survey.
\newblock {\em Discrete Math.}, 302(1-3):225--242, 2005.

\bibitem{sachs83}
Horst Sachs.
\newblock On a spatial analogue of {K}uratowski's theorem on planar graphs---an
  open problem.
\newblock In {\em Graph theory (\L ag\'ow, 1981)}, volume 1018 of {\em Lecture
  Notes in Math.}, pages 230--241. Springer, Berlin, 1983.

\bibitem{taniyama2000}
Kouki Taniyama.
\newblock Higher dimensional links in a simplicial complex embedded in a
  sphere.
\newblock {\em Pacific J. Math.}, 194(2):465--467, 2000.

\end{thebibliography}

\end{document}